\numberwithin{equation}{section}
\newtheorem{theorem}{Theorem}[section]
\newtheorem{lemma}[theorem]{Lemma}
\newtheorem{proposition}[theorem]{Proposition}
\newtheorem{corollary}[theorem]{Corollary}
\theoremstyle{definition}
\newtheorem{definition}[theorem]{Definition}
\theoremstyle{remark}
\newtheorem{remark}[theorem]{Remark}
\newtheorem{Definition and Notation}[theorem]{Definition and Notation}
\theoremstyle{Definition and Notation}
\newcommand{\coker}{\operatorname{coker}}
\newcommand{\Min}{\operatorname{Min}}
\newcommand{\Max}{\operatorname{Max}}
\newcommand{\Spec}{\operatorname{Spec}}
\newcommand{\HH}{\operatorname{H}}
\newcommand{\depth}{\operatorname{depth}}
\newcommand{\height}{\operatorname{ht}}
\newcommand{\id}{\operatorname{id}}
\newcommand{\Ext}{\mathrm{Ext}}
\newcommand{\Tor}{\operatorname{Tor}}
\newcommand{\Extindex}{\mathrm{Ext}\text{-}\mathrm{index}}
\newcommand{\Extgap}{\mathrm{Ext}\text{-}\mathrm{gap}}
\newcommand{\D}{\operatorname{D}}
\newcommand{\BN}{\Bbb N}
\newcommand{\fm}{\frak{m}}
\newcommand{\fp}{\frak{p}}
\newcommand{\fn}{\frak{n}}
\begin{document}

\title[On Ext-indices of ring extensions]
{On Ext-indices of ring extensions}

\author[S. Nasseh]{Saeed Nasseh}
\address{S. Nasseh, Department of Mathematics, Shahid Beheshti University,
Tehran, Iran   -and-   Institute for Studies in Theoretical Physics and
Mathematics, 19395-5746, Tehran, Iran.}
\email{saeed$\_$naseh@mail.ipm.ir}

\author[Y. Yoshino]{Yuji Yoshino}
\address{Y. Yoshino, Department of Mathematics, Faculty of Science, Okayama University, 700-8530, Okayama, Japan.}
\email{yoshino@math.okayama-u.ac.jp}


\begin{abstract}
In this paper we are concerned with the finiteness property of Ext-indices of several ring extensions. 
In this direction, we introduce some conjectures and discuss the relationship of them. 
Also we give affirmative answers to these conjectures in some special cases. 
Furthermore, we prove that the trivial extension of an Artinian local ring by its residue class field is always of finite Ext-index and we show that the Auslander-Reiten conjecture is true for this type of rings.
\end{abstract}

\subjclass[2000]{13C10, 13D07, 16E30}
\keywords{AB ring, trivial extension,  Cohen-Macaulay ring, Auslander-Reiten conjecture}

\maketitle


\section{introduction}
Throughout the paper,  all rings are assumed to be commutative Noetherian rings with unity. 

Let  $R$  be a ring. 
According to \cite{AY}, given nonzero $R$-modules $M$ and $N$, we define $p^R(M,N)$ by the following equality: 
$$
p^R (M,N)= \sup \{ i \in \BN  \ | \  \Ext_R^i(M,N) \neq  0 \} \ (\leq \infty).
$$
And we define the Ext-index of the ring  $R$,  denoted by $\Extindex (R)$, to be  the supremum of finite values  of  $p^R (M, N)$ for finitely generated  $R$-modules  $M$ and $N$, i.e.
$$
\begin{aligned}
\Extindex (R) = \sup \{ p^R(M,N) \ | \ &\text{$M$ and $N$ are finitely generated} \\
&\text{$R$-modules with } \ p^R(M,N) < \infty \  \}. 
\end{aligned}
$$

\begin{definition}
We say that the ring  $R$  is {\bf of finite Ext-index} if it satisfies 
$\Extindex (R) < \infty$. 
Following the paper \cite{HJ}, we often call  $R$  an AB ring if it is a Gorenstein local ring of finite Ext-index.  
\end{definition}

Recall that the following rings are examples known to be of finite Ext-index: 

\vspace{4pt} 
\noindent
Complete intersections (\cite[Corollary 3.5]{HJ}), 
Golod rings (\cite[Proposition 1.4]{JS}), 
Gorenstein local rings with minimal multiplicity (\cite[Theorem 3.6]{HJ}), 
and Gorenstein local rings with codimension at most 4 (\cite[Theorem 3.4]{S}). 
\vspace{4pt}


Note also from  \cite{JS} that there exists an example of an Artinian Gorenstein  local ring which is not AB.

\vspace{6pt}

In this paper we are mainly concerned with the finiteness property of Ext-indices of several ring extensions. 
This is motivated by the following conjectures, all of which seem to be open. 
(See also \cite{CH}.)

\par\vspace{6pt}\noindent {\bf Conjecture (L)} : 
Let  $R$ be a ring and let $\fp \in \Spec (R)$. 
If  $R$  is of finite Ext-index, then so would be the \underline{l}ocalization  $R_{\fp}$.  

\par\vspace{6pt}\noindent {\bf Conjecture (E)} : 
Let  $R$  be an algebra over a field $k$ and let  $\ell$  be a finitely generated \underline{e}xtension field  of  $k$. 
If  $R$  is of finite Ext-index, then so would be the ring $R \otimes _k \ell$.

\par\vspace{6pt}\noindent {\bf Conjecture (P)} : 
Let  $R$  be a ring. 
If  $R$  is of finite Ext-index, then so would be the \underline{p}olynomial ring $R[x]$.
\vspace{6pt}


In Section 2, after making some preliminaries, we discuss the relationship among these conjectures (Proposition \ref{relation of conjectures}). 
We shall also give some of the obvious cases for the above conjectures.

In Section 3, we are interested in the trivial extension $R(k)$  of an Artinian local ring  $R$ with its residue class field $k$. 
Surprisingly enough, we prove that  $R(k)$ is always of finite Ext-index (Corollary \ref{first cor}). 
Furthermore we can show that the Auslander-Reiten conjecture is true for the rings of this type (Corollary \ref{AR}).

In Section 4, we are interested in how the finiteness of Ext-index is preserved  by a base field extension for algebras. 
To be precise let  $R$ be a finite dimensional algebra over a field $k$ and we consider a transcendental extension $k(x)$  of  $k$. 
We show under a mild assumption that if  $R$  is of finite Ext-index then so is  the extended ring  $R \otimes _k k(x)$. 
See Theorem \ref{k(x)} for the detail. 
We also give some variants of this theorem in Theorems \ref{Extgap} and \ref{zeta}. 

For unexplained notation and terminologies in the paper, see the books \cite{BH}, \cite{E}, \cite{M} and \cite{W}.

\section{Preliminaries}

We recall some of the basic facts concerning the Ext-indices. 

\begin{lemma}\label{first lemma}  
\begin{itemize}
\item[] 
\item[$(1)$]
Let  $R \to S$  be a faithfully flat ring homomorphism. 
Then the inequality  $\Extindex (R) \leq \Extindex (S)$ holds. 
In particular, if  $S$  is of finite Ext-index, then so is  $R$. 

\item[$(2)$]
Let  $R = R_1 \times R_2$  be a product of rings. 
Then we have an equality 
$$
\Extindex (R) = \sup \{ \Extindex (R_1), \  \Extindex (R_2) \}.  
$$
In particular,  $R$  is of finite Ext-index if and only if so are the both of   $R_1$ and  $R_2$.

\item[$(3)$] 
Let  $x$  be a non-zero divisor of  $R$. 
Then the inequality $\Extindex (R/xR) \leq \Extindex (R) -1$ holds. 
In particular, if  $R$  is of finite Ext-index, then so is  $R/xR$. 

\item[$(4)$] 
Let $(R, \fm)$  be a Cohen-Macaulay local ring with dualizing module. 
And let  $x$  be a non-zero divisor of $R$ that belongs to $\fm$. 
If  $R/xR$  is of finite Ext-index, then so is  $R$. 
\end{itemize}
\end{lemma}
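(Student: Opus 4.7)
The strategy is to combine an Auslander--Buchweitz maximal Cohen--Macaulay (MCM) approximation of $N$ with Rees's change-of-rings isomorphism and a Nakayama argument. Set $d=\dim R$ and $n=\Extindex(R/xR)<\infty$; the goal is to bound $p^R(M,N)\le\max\{d,n+1\}$ for every finitely generated pair $M,N$ with $p^R(M,N)<\infty$.

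The first step is to reduce to the case that $N$ is MCM. Since $R$ has a dualizing module, the Auslander--Buchweitz theorem produces an exact sequence
$$0\longrightarrow Y\longrightarrow X\longrightarrow N\longrightarrow 0$$
with $X$ MCM and $\id_R Y<\infty$; Bass's formula forces $\id_R Y\le d$. Applying $\Hom_R(M,-)$ and using that $\Ext_R^j(M,Y)=0$ for $j>d$ gives $\Ext_R^i(M,N)\cong\Ext_R^i(M,X)$ for all $i>d$. It therefore suffices to bound $p^R(M,X)$, and the payoff is that $x$ is now automatically regular on the second argument.

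The second step applies Rees's theorem. With $\bar M:=M/xM$ and $\bar X:=X/xX$, the hypotheses $x\bar M=0$, $x$ regular on $R$, and $x$ regular on $X$ yield
$$\Ext_R^{i+1}(\bar M,X)\;\cong\;\Ext_{R/xR}^i(\bar M,\bar X)\qquad(i\ge 0).$$
From $0\to M\xrightarrow{x}M\to\bar M\to 0$ one reads off $p^R(\bar M,X)\le p^R(M,X)+1<\infty$, so $p^{R/xR}(\bar M,\bar X)<\infty$. Hence by the hypothesis on $R/xR$, $p^{R/xR}(\bar M,\bar X)\le n$, and the displayed isomorphism lifts this to $p^R(\bar M,X)\le n+1$.

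Finally, feed this back into the long exact sequence associated to $0\to M\xrightarrow{x}M\to\bar M\to 0$. For every $i>n+1$ both $\Ext_R^i(\bar M,X)$ and $\Ext_R^{i+1}(\bar M,X)$ vanish, so multiplication by $x$ is surjective on the finitely generated $R$-module $\Ext_R^i(M,X)$; with $x\in\fm$, Nakayama's lemma forces $\Ext_R^i(M,X)=0$. Thus $p^R(M,X)\le n+1$ and, combined with step one, $p^R(M,N)\le\max\{d,n+1\}$, a uniform bound. The main technical obstacle is step one: the dualizing module hypothesis is used precisely here, via the existence of MCM approximations, to convert $N$ into a module on which $x$ is regular before Rees's theorem can be invoked; the remainder of the argument is routine Rees--plus--Nakayama bookkeeping.
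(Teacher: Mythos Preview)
Your argument for part~(4) has the right architecture---MCM approximation plus Rees plus Nakayama, which is essentially the route in the references \cite{HJ,CH} the paper cites---but there is a genuine gap in the second and third steps. The sequence $0\to M\xrightarrow{\,x\,}M\to\bar M\to 0$ is exact on the left only when $x$ is a non-zero divisor on $M$, and nothing in the hypotheses guarantees this: $M$ is an arbitrary finitely generated $R$-module. You invoke this sequence twice (to bound $p^R(\bar M,X)$ from $p^R(M,X)$, and then in the Nakayama step to force $\Ext_R^i(M,X)=0$), and both uses collapse without left exactness.

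The repair is to perform on $M$ the same kind of reduction you already carried out for $N$. Since $R$ is Cohen--Macaulay local of dimension $d$, the $d$-th syzygy $\Omega^d M$ is maximal Cohen--Macaulay (or zero), hence $x$ is regular on it; and $\Ext_R^{i+d}(M,X)\cong\Ext_R^{i}(\Omega^d M,X)$ for $i\ge 1$, so it suffices to bound $p^R(\Omega^d M,X)$. With both arguments MCM your Rees-plus-Nakayama steps go through verbatim and give a uniform bound of the shape $\max\{d,\,n+1+d\}$. This two-sided reduction to MCM modules is exactly what the paper records in Remark~\ref{rem for zeta} (following \cite[Observation~(4.1)]{CH}): over a Cohen--Macaulay local ring with dualizing module, $\Extindex(R)<\infty$ is equivalent to a uniform bound on $p^R(M,N)$ for MCM modules $M,N$ only.

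A minor point: you address only part~(4). Parts (1)--(3) are indeed routine and the paper itself merely cites \cite{CH,HJ} for them, but a complete proposal should at least acknowledge this.
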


\begin{proof}
(1) and (2) are proved straightforward only from the definition. 
See \cite[Proposition 3.5]{CH} for (1) and \cite[Proposition 3.3]{CH} for (2). 
For  (3) and (4),  refer to \cite[Proposition 3.3(1)]{HJ}, \cite[Lemma 3.4]{CH} and \cite[Propositions 4.2]{CH}.
\end{proof}

In the following  we give an obvious case of Conjecture (L). 
In the lemma, $\Max (R)$  (resp. $\Min (R)$)  denotes the set of all maximal (resp. minimal prime) ideals of  $R$.

\begin{lemma}\label{maxmin} 
Let  $\fm \in \Max(R) \cap \Min (R)$. 
Then we have
$$
\Extindex (R_{\fm}) \leq \Extindex (R).  
$$
In particular, if $R$ is of finite Ext-index, then so is $R_{\fm}$ for each $\fm \in \Max (R) \cap \Min(R)$.
\end{lemma}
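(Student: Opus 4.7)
The plan is to exploit the fact that under the assumption $\fm \in \Max(R) \cap \Min(R)$, the local ring $R_\fm$ is Artinian: primes of $R_\fm$ correspond to primes of $R$ contained in $\fm$, and since $\fm$ is itself minimal there is only one such, so $\fm R_\fm$ is the unique prime ideal of $R_\fm$, hence nilpotent. Moreover, its residue field coincides with $R/\fm$, which is already a finitely generated (indeed cyclic) $R$-module, since $\fm$ is maximal in $R$.

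The first step would be the following translation lemma: every finitely generated $R_\fm$-module $M$, when viewed as an $R$-module via $R \to R_\fm$, is finitely generated as an $R$-module. Indeed, because $R_\fm$ is Artinian, $M$ admits a finite composition series over $R_\fm$ whose factors are all isomorphic to $R_\fm/\fm R_\fm \cong R/\fm$; since $R/\fm$ is finitely generated over $R$, a straightforward induction on the $R_\fm$-length of $M$ yields the claim.

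The second step is the expected comparison of $\Ext$ over $R$ and over $R_\fm$. Let $M$ and $N$ be finitely generated $R_\fm$-modules, viewed also as finitely generated $R$-modules via the previous step. Since every element of $R \setminus \fm$ acts invertibly on $M$ and on $N$, functoriality of $\Ext$ shows it also acts invertibly on $\Ext^i_R(M, N)$, so $\Ext^i_R(M,N)$ is already an $R_\fm$-module and coincides with its localization at $\fm$. Combining this with flat base change along the flat map $R \to R_\fm$ (applicable since $M$ is finitely presented over the Noetherian ring $R$) gives
$$
\Ext^i_R(M, N) \;\cong\; \Ext^i_R(M, N)_\fm \;\cong\; \Ext^i_{R_\fm}(M_\fm, N_\fm) \;=\; \Ext^i_{R_\fm}(M, N)
$$
for every $i \geq 0$; in particular $p^R(M, N) = p^{R_\fm}(M, N)$.

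Taking the supremum over all pairs $(M, N)$ of finitely generated $R_\fm$-modules with $p^{R_\fm}(M,N) < \infty$ then produces the desired inequality $\Extindex(R_\fm) \leq \Extindex(R)$. No step seems genuinely delicate; the only point to keep in mind is that the hypothesis $\fm \in \Min(R)$ is essential for the translation lemma, since it is what forces finite $R_\fm$-length and thus the preservation of finite generation upon restricting scalars from $R_\fm$ to $R$.
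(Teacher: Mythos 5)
Your proposal is correct, but it takes a genuinely different route from the paper. The paper's proof is structural: it takes an irredundant primary decomposition $(0)=Q_1\cap Q_2$ with $Q_1$ the $\fm$-primary component, observes that maximality and minimality of $\fm$ force $Q_1+Q_2=R$, hence $R\cong R/Q_1\times R/Q_2$ with $R_{\fm}\cong R/Q_1$, and then simply quotes Lemma \ref{first lemma}(2) on products of rings. You instead argue module-theoretically: since $\fm$ is both maximal and minimal, $R_{\fm}$ is Artinian with residue field $R/\fm$, so restriction of scalars along $R\to R_{\fm}$ preserves finite generation (induction on length), and for finitely generated $R_{\fm}$-modules $M,N$ the invertible action of $R\setminus\fm$ together with flat base change gives $\Ext^i_R(M,N)\cong\Ext^i_{R_{\fm}}(M,N)$, whence $p^R(M,N)=p^{R_{\fm}}(M,N)$ and the inequality follows by taking suprema. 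All steps check out (the composition-series argument, the identification $R_{\fm}/\fm R_{\fm}\cong R/\fm$, and the localization of Ext are each standard). What each approach buys: the paper's argument is shorter given that the product lemma is already in place, and it exhibits the stronger fact that $R_{\fm}$ is literally a direct ring factor of $R$; your argument is self-contained, avoids primary decomposition and the product lemma entirely, and isolates a transfer principle (finite generation and Ext agree under restriction of scalars when the localization is module-finite over the base) that is of some independent interest.
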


\begin{proof}
Let  $(0) = Q_1  \cap Q_2$  be  an irredundant primary decomposition, where  $Q_1$  is an $\fm$-primary component and  $Q_2$  is the intersection of the components  belonging to other primes.  
Since  $\fm$  is a maximal ideal, we have  $Q_1 + Q_2 = R$, hence  $R \cong R/Q_1 \times R/Q_2$. 
Noting that  $(0)_{\fm} = {Q_1}_{\fm}$, we see that 
$R_{\fm} \cong (R/Q_1)_{\fm} \cong  R/Q_1$. 
Therefore the lemma follows from Lemma \ref{first lemma}(2).
\end{proof}

\begin{corollary}\label{cor maxmin} 
If $R$ is an Artinian ring of finite Ext-index, then so is $R_{\fp}$ for every prime ideal $\fp$ of $R$. 
\end{corollary}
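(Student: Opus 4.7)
The plan is to reduce the statement immediately to Lemma \ref{maxmin}. The key observation is that in an Artinian ring every prime ideal is simultaneously maximal and minimal: Artinian rings have Krull dimension zero, so any $\fp \in \Spec(R)$ contains no prime properly (hence $\fp \in \Min(R)$) and is contained in no prime properly (hence $\fp \in \Max(R)$). Therefore $\Spec(R) = \Max(R) \cap \Min(R)$.

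With this identification, for an arbitrary prime $\fp$ of $R$ we have $\fp \in \Max(R) \cap \Min(R)$, and Lemma \ref{maxmin} applies directly to yield
\[
\Extindex(R_{\fp}) \leq \Extindex(R) < \infty.
\]

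There is essentially no obstacle here; the corollary is a formal consequence of Lemma \ref{maxmin} together with the standard structure of $\Spec$ of an Artinian ring. If one prefers a more self-contained presentation, one can alternatively invoke the classical decomposition of an Artinian ring as a finite product $R \cong \prod_{\fm \in \Max(R)} R_{\fm}$ and then combine Lemma \ref{first lemma}(2) with the fact that localization of this product at $\fp$ returns the factor $R_{\fp}$; this route avoids the primary decomposition used in the proof of Lemma \ref{maxmin}, but the end result is the same.
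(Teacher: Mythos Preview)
Your argument is correct and is exactly what the paper intends: the corollary is stated without proof immediately after Lemma~\ref{maxmin}, the implicit reasoning being that in an Artinian ring every prime ideal lies in $\Max(R)\cap\Min(R)$, so Lemma~\ref{maxmin} applies directly.
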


\begin{lemma}\label{localization} 
Let $R$ be a Cohen-Macaulay ring of finite Ext-index. 
Then $R_{\fm}$ is of finite Ext-index for every maximal ideal $\fm$ of $R$.
\end{lemma}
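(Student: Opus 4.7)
The plan is to reduce the problem to an Artinian localization where Lemma \ref{maxmin} applies, then climb back up using Lemma \ref{first lemma}(4), with the technicality of the possibly missing dualizing module on $R_\fm$ handled by a detour through the completion.

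Set $d = \dim R_\fm$. If $d = 0$, then $\fm \in \Min(R) \cap \Max(R)$ and Lemma \ref{maxmin} gives the conclusion immediately. Suppose $d > 0$. Since $R$ is Cohen--Macaulay, the zero divisors of any quotient of $R$ by a regular sequence form the union of its (finitely many) minimal primes; combined with prime avoidance and the fact that the image of $\fm$ has positive local dimension at each intermediate stage, I can construct an $R$-regular sequence $x_1, \dots, x_d \in \fm$. Setting $A := R/(x_1, \dots, x_d)R$, $d$ iterations of Lemma \ref{first lemma}(3) give $\Extindex(A) < \infty$, and $A$ is again Cohen--Macaulay. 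The image $\bar{\fm}$ of $\fm$ in $A$ satisfies $\dim A_{\bar\fm} = 0$, so $\bar\fm \in \Max(A) \cap \Min(A)$; Lemma \ref{maxmin} then yields $\Extindex(A_{\bar\fm}) < \infty$. Note that $A_{\bar\fm} = R_\fm / (x_1, \dots, x_d)R_\fm$ is Artinian local.

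The main obstacle is now to lift finiteness of Ext-index from this Artinian quotient back to $R_\fm$: Lemma \ref{first lemma}(4) is the natural tool, but it requires a dualizing module, and $R_\fm$ need not have one. I would circumvent this by working with the $\fm R_\fm$-adic completion $S := \widehat{R_\fm}$, which is Cohen--Macaulay local and, being complete (hence a quotient of a regular local ring by Cohen's structure theorem), admits a dualizing module $\omega_S$. The sequence $x_1, \dots, x_d$ remains $S$-regular by faithful flatness of completion. The crucial observation is that $A_{\bar\fm}$ is Artinian local, hence already complete, so $S/(x_1, \dots, x_d)S \cong A_{\bar\fm}$, and therefore has finite Ext-index.

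To conclude, I would invoke Lemma \ref{first lemma}(4) $d$ times along the chain of quotients by the regular sequence: at each stage, $S/(x_1, \dots, x_i)S$ is Cohen--Macaulay local with dualizing module $\omega_S / (x_1, \dots, x_i)\omega_S$, so the hypotheses apply and propagate finiteness upward to yield $\Extindex(S) < \infty$. Finally, applying Lemma \ref{first lemma}(1) to the faithfully flat map $R_\fm \to S$ gives $\Extindex(R_\fm) \leq \Extindex(S) < \infty$, as desired.
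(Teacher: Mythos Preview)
Your proof is correct and follows essentially the same route as the paper's: mod out by a maximal regular sequence in $\fm$ (using Lemma~\ref{first lemma}(3)), localize and invoke Lemma~\ref{maxmin} on the resulting Artinian ring, identify that Artinian ring with $\widehat{R_\fm}/(x_1,\dots,x_d)\widehat{R_\fm}$, lift via Lemma~\ref{first lemma}(4) inside the completion (where a dualizing module is guaranteed), and descend to $R_\fm$ by Lemma~\ref{first lemma}(1). Your version is more explicit about why the detour through the completion is needed (namely, $R_\fm$ itself may lack a dualizing module), but the argument is the same.
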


\begin{proof}
Let $d = \height (\fm) \geq 0$. 
We can take a regular sequence  $x_1 , \ldots , x_d  \in \fm$  on $R$. 
Then  $R/(x_1,...,x_d)R$ is of finite Ext-index by \ref{first lemma}(3). 
And thus  $(R/(x_1,...,x_d)R)_{\fm}$  is an Artinian ring of finite Ext-index by Lemma  \ref{maxmin}. 
Apply  \ref{first lemma}(4)  to this ring that is isomorphic to $\widehat{(R_{\fm})}/(x_1, \ldots , x_d)\widehat{(R_{\fm})}$, and we see that $\widehat{(R_{\fm})}$  is of finite Ext-index. 
Finally, by virtue of  \ref{first lemma}(1), we have the finiteness of Ext-index of  $R_{\fm}$.  
\end{proof}

The following is a corollary of the proof above.

\begin{corollary}\label{completion}
Let  $R$  be a Cohen-Macaulay local ring of finite Ext-index.
Then the completion  $\widehat R$  is also of finite Ext-index.
\end{corollary}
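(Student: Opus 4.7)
The plan is to run the argument of Lemma \ref{localization} verbatim, noting that when $R$ is already local, the localization step (which used Lemma \ref{maxmin}) is unnecessary. Concretely, let $d = \dim R$ and, using that $R$ is Cohen-Macaulay, choose a maximal regular sequence $x_1, \ldots, x_d \in \fm$. By Lemma \ref{first lemma}(3) applied $d$ times, the Artinian local ring $R/(x_1, \ldots, x_d)R$ has finite Ext-index.

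Next I would identify this Artinian ring with a quotient of $\widehat R$. Since $(x_1,\ldots,x_d)R$ is $\fm$-primary, the quotient $R/(x_1,\ldots,x_d)R$ is complete and one has the natural isomorphism
$$
R/(x_1,\ldots,x_d)R \;\cong\; \widehat{R}/(x_1,\ldots,x_d)\widehat{R}.
$$
Hence $\widehat{R}/(x_1,\ldots,x_d)\widehat{R}$ is of finite Ext-index as well.

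Now I would climb back up along the regular sequence using Lemma \ref{first lemma}(4). Since $\widehat{R}$ is a complete Noetherian local ring, Cohen's structure theorem presents it as a quotient of a regular local ring, so $\widehat{R}$ admits a dualizing module, and the same is true of each quotient $\widehat{R}/(x_1,\ldots,x_i)\widehat{R}$, which remains Cohen-Macaulay local. The sequence $x_1,\ldots,x_d$ is still regular on $\widehat{R}$ (faithful flatness of completion). Applying Lemma \ref{first lemma}(4) iteratively to the non-zero divisors $x_d, x_{d-1}, \ldots, x_1$, we deduce that $\widehat{R}/(x_1,\ldots,x_i)\widehat{R}$ has finite Ext-index for each $i$, and in particular $\widehat{R}$ itself does.

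There is essentially no obstacle here beyond recognizing that each step of the proof of Lemma \ref{localization} after the appeal to Lemma \ref{maxmin} is carried out inside the local ring $R$ (not inside $R_{\fm}$), so the conclusion about $\widehat{R}$ is already contained in that argument. The only subtle point worth mentioning is the availability of a dualizing module on $\widehat{R}$, which is automatic by Cohen's structure theorem.
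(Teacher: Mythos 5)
Your proposal is correct and is essentially the paper's own argument: the paper derives this corollary precisely by observing that the proof of Lemma \ref{localization} already passes through $\widehat{R}/(x_1,\ldots,x_d)\widehat{R}\cong R/(x_1,\ldots,x_d)R$ and Lemma \ref{first lemma}(3),(4) to conclude finiteness of Ext-index for $\widehat{R}$ (the dualizing module being available since $\widehat{R}$ is complete). Your only addition is to note explicitly that the localization step via Lemma \ref{maxmin} is unneeded when $R$ is already local, which is exactly how the corollary follows "from the proof above."
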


\begin{lemma}\label{Extindex=dim}
Let  $R$ be a Gorenstein ring of finite Ext-index and suppose that  $\dim (R) < \infty$. 
Then the equality  $\Extindex (R) =  \dim (R)$ holds.
\end{lemma}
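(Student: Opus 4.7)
The proof splits naturally into the two inequalities $\Extindex(R) \geq \dim(R)$ and $\Extindex(R) \leq \dim(R)$; throughout, write $d = \dim(R)$. For the lower bound the plan is to locate a pair of finitely generated modules whose $p^R$-value equals $d$. Choose a maximal ideal $\fm$ of $R$ with $\height(\fm) = d$ (such an $\fm$ exists because $d$ is achieved by a chain of primes, whose top is contained in some maximal ideal of height $d$). Since $R$ is Gorenstein of finite Krull dimension, $\injdim_R R = d$, hence $\Ext^i_R(-, R) = 0$ for every $i > d$; in particular $p^R(R/\fm, R) \leq d$. For the matching nonvanishing in degree $d$, note that $\Supp_R \Ext^i_R(R/\fm, R) \subseteq \{\fm\}$, so flat base change reduces the question to the Gorenstein local ring $R_\fm$ of dimension $d$, where the standard computation gives $\Ext^d_{R_\fm}(R_\fm/\fm R_\fm, R_\fm) \cong R_\fm/\fm R_\fm \neq 0$. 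This yields $p^R(R/\fm, R) = d < \infty$, and hence $\Extindex(R) \geq d$.

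For the upper bound, I plan to argue by contradiction. Suppose finitely generated $R$-modules $M, N$ exist with $d < p := p^R(M, N) < \infty$. Choose a surjection $F \twoheadrightarrow N$ with $F$ finitely generated free, set $N' = \ker(F \to N)$, apply $\Hom_R(M, -)$ to $0 \to N' \to F \to N \to 0$, and use $\Ext^i_R(M, F) = 0$ for every $i > d$ to extract from the long exact $\Ext$-sequence the dimension-shift isomorphism
$$
\Ext^i_R(M, N) \;\cong\; \Ext^{i+1}_R(M, N') \qquad \text{for every } i \geq d+1.
$$
Setting $i = p$ gives $\Ext^{p+1}_R(M, N') \neq 0$, and $i > p$ gives $\Ext^{j}_R(M, N') = 0$ for every $j > p+1$, so $p^R(M, N') = p+1 < \infty$. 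Since $p + 1 > d$, the construction can be repeated with $N'$ in place of $N$; iterating produces modules $N_0 = N, N_1 = N', N_2, \ldots$ with $p^R(M, N_k) = p + k < \infty$ for every $k \geq 0$. This exhibits arbitrarily large finite values of $p^R$, contradicting $\Extindex(R) < \infty$.

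The main obstacle to the upper-bound argument is ensuring that the iteration does not degenerate: one must verify that each $N_k$ remains nonfree, so that $N_{k+1}$ is a well-defined nonzero syzygy and the shift isomorphism can be reapplied. This is automatic, because if some $N_k$ were free then $\Ext^i_R(M, N_k) = 0$ for every $i > d$ (using $\injdim_R R = d$), forcing $p^R(M, N_k) \leq d$, in contradiction with the just-derived equality $p^R(M, N_k) = p + k > d$. Once this is checked, the remainder of the argument consists only of the long exact $\Ext$-sequence, the Gorenstein hypothesis $\injdim_R R = d$, and the standard local identification $\Ext^d_{R_\fm}(R_\fm/\fm R_\fm, R_\fm) \cong R_\fm/\fm R_\fm$, so I expect no further essential difficulty.
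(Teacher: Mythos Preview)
Your proof is correct, but the upper-bound argument takes a genuinely different route from the paper's. The paper localizes at an arbitrary maximal ideal $\fm$, invokes Lemma~\ref{localization} to see that $R_\fm$ is an AB ring, and then quotes the Huneke--Jorgensen result \cite[Proposition 3.2]{HJ} to obtain $\Extindex(R_\fm)=\height(\fm)\leq d$; the global vanishing $\Ext^i_R(M,N)=0$ for $i>d$ then follows by checking it locally. Your argument instead works globally and is entirely self-contained: using only $\injdim_R R = d$, you dimension-shift through syzygies of $N$ to manufacture an unbounded sequence of finite values $p^R(M,N_k)=p+k$, contradicting finiteness of the Ext-index. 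In effect you are reproving (and globalizing) the cited local result rather than quoting it, so your version is more elementary and independent of \cite{HJ}, at the cost of a short extra paragraph. The lower-bound halves are essentially the same; you have simply made explicit the module $L=R/\fm$ that the paper leaves unspecified.
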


Before proving this, we should remark that if  $R$ is an AB ring then the equality was shown in \cite[Proposition 3.2]{HJ}. 
Also this has been proved by Mori \cite[Corollary 3.3]{Mo} including non-commutative cases. 
We give below the proof for the convenience of the reader.

\begin{proof}
Let  $\fm$ be a maximal ideal of $R$. 
By \ref{localization},  we know that $R_{\fm}$ is an AB ring. 
It follows from the above mentioned result of \cite{HJ} that  $\Extindex (R_{\fm})= \height (\fm) \leq \dim (R)$. 

Now let $M$ and $N$ be finitely generated $R$-modules with $\Ext^i_R(M,N)=0$  for  $i\gg 0$.
Then we have obviously $\Ext^i_{R_{\fm}}(M_{\fm},N_{\fm}) =0$ for $i\gg 0$,  
hence the equality holds for all $i> \dim (R) \geq \Extindex (R_{\fm})$. 
This is true for any maximal ideal $\fm$. 
Hence we have  $\Ext^i_R(M,N)=0$  for $i > \dim (R)$. 
Therefore, it is concluded that  $\Extindex (R) \leq \dim (R)$. 

On the other hand, since  $\dim (R)= \id (R)$, we can find a finitely generated $R$-module $L$ such that $\Ext _R^d(L,R) \neq 0$, and $\Ext_R^i(L,R)=0$ for all $i> d = \dim (R)$. 
This implies that $\dim (R) \leq \Extindex (R)$. 
\end{proof}

\begin{lemma}\label{finite Krull dim} 
Let  $R$ be a Gorenstein ring of finite Krull dimension and suppose  $R_{\fm}$ is of finite Ext-index for each $\fm \in \Max (R)$. 
Then $R$ is of finite Ext-index.
\end{lemma}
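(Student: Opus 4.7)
The plan is to imitate, almost verbatim, the upper-bound half of the proof of Lemma \ref{Extindex=dim}. Indeed the Gorenstein hypothesis plus finite Ext-index of each localization means that every $R_{\fm}$ is an AB ring, so the argument transplants cleanly; the only conceptual point is that vanishing of $\Ext$ is a local condition at maximal ideals.

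First I would fix finitely generated $R$-modules $M$ and $N$ with $p^R(M,N)<\infty$, and set $d=\dim(R)<\infty$. For each maximal ideal $\fm$ of $R$, the localization $R_{\fm}$ is Gorenstein local (since $R$ is Gorenstein) and by hypothesis of finite Ext-index, hence an AB ring. By the result of Huneke--Jorgensen cited in the remark before the proof of Lemma \ref{Extindex=dim} (or directly by Lemma \ref{Extindex=dim} applied to $R_{\fm}$), one has
\[
\Extindex(R_{\fm}) \;=\; \height(\fm) \;\leq\; d.
\]

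Next, since $\Ext^{i}_{R}(M,N)_{\fm} \cong \Ext^{i}_{R_{\fm}}(M_{\fm},N_{\fm})$ and $p^R(M,N)<\infty$, we have $\Ext^{i}_{R_{\fm}}(M_{\fm},N_{\fm})=0$ for $i\gg 0$ at each $\fm\in\Max(R)$. Applying the finiteness of $\Extindex(R_{\fm})$ to this pair of $R_{\fm}$-modules, the vanishing in high degree forces vanishing for all $i>\Extindex(R_{\fm})$, hence in particular for all $i>d$. Since this holds at every maximal ideal and $\Ext$-vanishing of a finitely generated module is checked locally at maximal ideals, we conclude $\Ext^{i}_{R}(M,N)=0$ for all $i>d$. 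Therefore $p^R(M,N)\leq d$, and taking the supremum gives $\Extindex(R)\leq d<\infty$.

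There is no serious obstacle: the proof is essentially a recycling of the argument in Lemma \ref{Extindex=dim}, the only new ingredient being to read that argument in the reverse direction, i.e.\ to use finite Ext-index of each $R_{\fm}$ to produce finite Ext-index of $R$, rather than the converse. The Gorenstein assumption is used exactly once, to promote ``finite Ext-index'' at each $R_{\fm}$ to the sharper fact that $\Extindex(R_{\fm})=\height(\fm)\le d$, which is what makes the uniform bound $d$ available independently of $\fm$.
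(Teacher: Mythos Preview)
Your proposal is correct and follows essentially the same approach as the paper's proof, which simply states that ``completely as in the same way as in the proof of \ref{Extindex=dim}'' one obtains the chain $\Extindex(R)\leq\sup\{\Extindex(R_{\fm})\mid\fm\in\Max(R)\}\leq\dim(R)<\infty$. Your write-up spells out exactly this recycling, including the use of the Gorenstein hypothesis to get the uniform bound $\Extindex(R_{\fm})=\height(\fm)\leq d$.
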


\begin{proof}
In fact,  completely as in the same way as in the proof of \ref{Extindex=dim} we can prove the following inequalities: 
$$
\Extindex (R) \leq \sup \{\Extindex (R_{\fm}) \ | \ \fm \in \Max (R) \} \leq \dim (R) < \infty.
$$
\end{proof}

We observe the relationship among Conjectures (L), (E) and (P) introduced in Section 1.

\begin{proposition}\label{relation of conjectures}  
\begin{itemize}
\item[]
\item[$(1)$] 
Suppose that Conjecture (P)  is true for all Cohen-Macaulay local rings  $R$  of dimension one. 
Then Conjecture (L) is true for all Cohen-Macaulay local rings $R$  of any dimension with dualizing module.

\item[$(2)$] 
Suppose that Conjecture (P)  is true for a $k$-algebra $R$. 
Then Conjecture  (E)  is true for $R$ and for all simple algebraic extensions  $\ell$ of $k$.

\item[$(3)$]
Suppose that Conjectures (L) and (E) are true for all Gorenstein rings containing field.  
Then Conjecture (P) is true for all Gorenstein rings of finite Krull dimension that contain fields. 
\end{itemize}
\end{proposition}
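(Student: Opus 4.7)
The plan is to handle the three implications separately. For part (2), any simple algebraic extension has the form $\ell\cong k[t]/(f(t))$ with $f\in k[t]$ monic irreducible, so $R\otimes_k\ell\cong R[t]/(f(t))$, and $f(t)$ is a monic, hence non-zerodivisor, element of $R[t]$. By (P) the polynomial ring $R[t]$ has finite Ext-index, and Lemma~\ref{first lemma}(3) transports this to the principal quotient.

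For part (3), let $R$ be Gorenstein of finite Krull dimension containing $k$ and of finite Ext-index. Then $R[t]$ is Gorenstein of finite Krull dimension, so by Lemma~\ref{finite Krull dim} it suffices to show that $R[t]_{\fn}$ has finite Ext-index for every maximal ideal $\fn$ of $R[t]$. Since $k[t]$ is a PID, $\fn\cap k[t]$ is either $(0)$ or $(g)$ for some irreducible $g\in k[t]$. In the latter case set $\ell=k[t]/(g)$, a finite extension of $k$; then $R\otimes_k\ell\cong R[t]/(g)$ is a finite free extension of $R$, hence Gorenstein, and (E) gives it finite Ext-index. Therefore (L) yields finite Ext-index of $(R\otimes_k\ell)_{\fn/(g)}\cong R[t]_{\fn}/gR[t]_{\fn}$, and Lemma~\ref{first lemma}(4) lifts this across the non-zerodivisor $g$ to the Gorenstein local ring $R[t]_{\fn}$. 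In the former case, $\fn$ avoids $k[t]\setminus\{0\}$, so its image is a maximal ideal of $R\otimes_k k(t)\cong R[t]_{k[t]\setminus\{0\}}$; since $k(t)/k$ is finitely generated, (E) gives finite Ext-index of $R\otimes_k k(t)$, and (L) then yields finite Ext-index of $(R\otimes_k k(t))_{\fn}\cong R[t]_{\fn}$.

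For part (1), let $R$ be CM local with dualizing module, of finite Ext-index, and $\fp\in\Spec R$ of height $h$. The first reduction is to the case where $\fp$ is a minimal prime of $R$. Choose a regular sequence $y_1,\ldots,y_h\in\fp$ (lifted from a system of parameters of $R_\fp$); then $R_\fp/(y_1,\ldots,y_h)R_\fp$ is Artinian and coincides with $(R/(y_1,\ldots,y_h))_{\bar\fp}$, where $\bar\fp$ is a minimal prime of the CM local ring $R/(y_1,\ldots,y_h)$ (of finite Ext-index by Lemma~\ref{first lemma}(3) and still possessing a dualizing module). Iterated application of Lemma~\ref{first lemma}(4) to the successive quotients of $R_\fp$ shows that finiteness of Ext-index of $R_\fp$ follows from that of $(R/(y_1,\ldots,y_h))_{\bar\fp}$, hence reducing to the minimal-prime case. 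Assume then $\fp$ is minimal with $\dim R\geq 1$, and kill a regular sequence $z_1,\ldots,z_{d-1}\in\fm$; these automatically avoid $\fp\in\Ass R$, so the quotient $R''$ is a one-dimensional CM local ring of finite Ext-index, and by (P) the polynomial ring $R''[x]$ has finite Ext-index.

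The main obstacle is the final transfer: connecting finiteness of Ext-index of the two-dimensional ring $R''[x]$ back to the Artinian local ring $R_\fp$. The plan is to identify a suitable maximal ideal $\fn$ of $R''[x]$ together with a chain of faithfully flat local maps combined with quotients by non-zerodivisors, to which Lemma~\ref{first lemma}(1) and Lemma~\ref{first lemma}(4) can be applied iteratively, linking $R''[x]_{\fn}$ to $R_\fp$. Setting up this chain explicitly—particularly handling the dimension discrepancy between the Artinian $R_\fp$ and the positive-dimensional $R''[x]_{\fn}$—is the delicate step where most of the real content of part (1) resides.
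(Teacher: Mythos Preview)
Your arguments for parts (2) and (3) are correct. Part (2) matches the paper exactly. For part (3) the paper takes a somewhat different route: it first applies (L) to replace $R$ by $R_{\fn\cap R}$, then passes to the completion so as to obtain a coefficient field $k$, which forces every maximal ideal of $R[x]$ lying over $\fm$ to have the form $(\fm,f(x))$ with $f\in k[x]$ irreducible and nonzero; thus the paper never needs your case $\fn\cap k[t]=(0)$ and invokes (E) only for finite extensions. Your version, handling that case via (E) for the transcendental extension $k(t)$, is a legitimate alternative and avoids the completion step.

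Part (1), however, has a real gap---and you acknowledge as much. After your first reduction (killing a regular sequence in $\fp$ to make $\fp$ minimal), your second move of killing a further regular sequence $z_1,\ldots,z_{d-1}\in\fm$ to produce a one-dimensional $R''$ severs the link to $R_\fp$: since each $z_i\notin\fp$, the $z_i$ become units in $R_\fp$, and there is no chain of faithfully flat maps and non-zerodivisor quotients connecting $R''[x]_{\fn}$ to $R_\fp$ of the sort you envisage. (Incidentally, ``automatically avoid $\fp$'' is only immediate for $z_1$; for $i>1$ you would need prime avoidance.) The ``final transfer'' you leave open is in fact the entire content of the argument.

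The paper avoids this dead end by reducing in a different order. It first inducts on $\height(\fm/\fp)$ to assume $\height(\fm/\fp)=1$, and only then kills a maximal regular sequence inside $\fp$; after both steps $R$ itself is one-dimensional with $\fp$ minimal. The missing idea is then the Rabinowitsch trick: for a non-zerodivisor $a\in\fm$ one has $R_a\cong R[x]/(ax-1)$, so (P) for the one-dimensional $R$ combined with Lemma~\ref{first lemma}(3) gives finite Ext-index for $R_a$. Since $R_a$ is Artinian and $a\notin\fp$, Corollary~\ref{cor maxmin} finishes by passing to the further localization $R_\fp$.
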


\begin{proof}
(1) 
Suppose  (P)  holds for all Cohen-Macaulay local rings of dimension one. 
Let  $(R, \fm)$  be a Cohen-Macaulay local ring with dualizing module 
 and let  $\fp \in \Spec (R)$, and assume that  $R$  is of finite Ext-index. 
To prove that $R_{\fp}$  is of finite Ext-index, by induction on $\height (\fm/\fp)$, we may assume that  $\height  (\fm/\fp) =1$. 
Take a maximal regular sequence $\{ x_1, \ldots , x_h \}$  in  $\fp$  (so that  $h = \height (\fp)$), and consider the residue ring  $\overline R = R/(x_1, \ldots , x_h)$. 
By virtue of Lemma \ref{first lemma}(3) and (4), replacing  $R$  by $\overline R$, we may assume that  $R$  is of one dimension and  $\fp \in \Min (R)$. 
Then take a non-zero divisor  $a \in \fm$. 
Since  $R[x]$  is of finite Ext-index, it follows from Lemma \ref{first lemma}(3) that  $R_{a} \cong  R[x]/(ax-1)$  is also of finite Ext-index. 
Since $R_{a}$  is Artinian and  $a \not\in \fp$,  the localization  $R_{\fp}$ is of finite Ext-index as well, by Corollary \ref{cor maxmin}.  

(2) 
Let  $R$  be a $k$-algebra of finite Ext-index. 
Assume $\ell = k ( \alpha )$  is a simple algebraic extension of  $k$. 
Let  $f(x)$  be the minimal polynomial of  $\alpha$  over  $k$. 
Then we have $R \otimes _k \ell \cong  R[x]/(f(x))$. 
Since we are assuming that  $R[x]$ is of finite Ext-index,  $R[x]/(f(x))$ is of finite Ext-index as well, by Lemma \ref{first lemma}(3). 

(3)
Let  $R$  be a Gorenstein ring of finite Ext-index that contains a field. 
To prove that the polynomial ring  $R[x]$  is of finite Ext-index, 
 we only have to show that  $R[x]_{\frak M}$ is so for each maximal ideal $\frak M$  of  $R[x]$. 
(See Lemma \ref{finite Krull dim}.)
Set  $\fp = \frak M \cap R$ and we have that  $R_{\fp}$ is of finite Ext-index by the assumption that (L) is true for $R$. 
Replacing  $R$  with  $R_{\fp}$, we may assume that  $(R, \fm)$  is a Gorenstein local ring of finite Ext-index and that  $\frak M \cap R = \fm$. 
By virtue of Lemma \ref{first lemma}(1) and Corollary \ref{completion}, we may also assume that  $R$  is a complete local ring. 
Since  $R$ contains a field, $R$  has  a coefficient field $k$. 
Then it is obvious that there is an irreducible polynomial $f(x) (\not= 0) \in k[x]$  with  $\frak M = (\fm , f(x))R[x]$. 
By Lemma \ref{first lemma}(4), the finiteness of Ext-index for  $R[x]_{\frak M}$  follows from that for  $\left( R[x]/(f(x)) \right)_{\frak M}$. 
But the last ring is a localization of $R \otimes _k k[x]/(f(x))$, which is of finite Ext-index by the validity of  (E) and (L). 
\end{proof}

As to Conjecture (P) we give an affirmative answer in a special case.

\begin{proposition}\label{poly} 
Let  $R$  be an Artinian Gorenstein ring of finite Ext-index. 
Assume that every residue class field of  $R$  is algebraically closed. 
Then the polynomial ring  $R[x_1,...,x_n]$  is also of finite Ext-index.
\end{proposition}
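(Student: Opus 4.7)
The plan is to reduce to the local case, use the algebraic closedness of residue fields to describe all maximal ideals of $R[x_1,\ldots,x_n]$ explicitly via Hilbert's Nullstellensatz, and then climb up along a regular sequence of linear forms using Lemma \ref{first lemma}(4).

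First, since $R$ is Artinian, it decomposes as a finite product $R = R_1 \times \cdots \times R_s$ of Artinian local Gorenstein rings, each with algebraically closed residue field. By Lemma \ref{first lemma}(2) every $R_i$ has finite Ext-index, and since $R[x_1,\ldots,x_n] = \prod_i R_i[x_1,\ldots,x_n]$, a second application of Lemma \ref{first lemma}(2) reduces the problem to the case where $R$ itself is an Artinian local Gorenstein ring with maximal ideal $\fm$ and algebraically closed residue field $k = R/\fm$.

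Next, the polynomial ring $R[x_1,\ldots,x_n]$ is Gorenstein of Krull dimension $n$, so by Lemma \ref{finite Krull dim} it suffices to show that $R[x_1,\ldots,x_n]_{\frak M}$ has finite Ext-index for every maximal ideal $\frak M$. Since $\frak M \cap R$ is prime in the Artinian ring $R$, it must coincide with $\fm$, so $\frak M/\fm R[x_1,\ldots,x_n]$ is a maximal ideal of $k[x_1,\ldots,x_n]$. Because $k$ is algebraically closed, the Hilbert Nullstellensatz gives $\frak M/\fm R[x_1,\ldots,x_n] = (x_1 - \bar{a}_1,\ldots,x_n - \bar{a}_n)$ for some $\bar{a}_i \in k$; lifting each $\bar{a}_i$ to $a_i \in R$ yields the explicit description $\frak M = (\fm, x_1 - a_1, \ldots, x_n - a_n)$.

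The substitution $y_i = x_i - a_i$ is an automorphism of $R[x_1,\ldots,x_n]$, so I may replace $\frak M$ by $(\fm, y_1, \ldots, y_n)$ and work with $S_j := R[y_1,\ldots,y_j]_{(\fm, y_1,\ldots,y_j)}$ for $0 \le j \le n$, where $S_0 = R$ and $S_n$ is the ring of interest. Each $S_j$ is a local Gorenstein ring (hence Cohen--Macaulay with dualizing module), $y_j$ is a nonzerodivisor in $S_j$, and $S_j/y_j S_j \cong S_{j-1}$. Starting from $S_0 = R$, which has finite Ext-index by hypothesis, I apply Lemma \ref{first lemma}(4) successively for $j = 1, \ldots, n$ and conclude that $S_n$, and hence $R[x_1,\ldots,x_n]_{\frak M}$, has finite Ext-index. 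The only delicate step is the Nullstellensatz description of $\frak M$: without the algebraically closed hypothesis the residue extension $R[x_1,\ldots,x_n]/\frak M$ over $k$ could be a nontrivial finite extension, and the regular sequence of linear forms needed to climb down to $R$ via Lemma \ref{first lemma}(4) would no longer be available.
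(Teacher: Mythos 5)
Your proof is correct and follows essentially the same route as the paper: reduce via Lemma \ref{finite Krull dim} to the localizations at maximal ideals, use the Nullstellensatz to write $\frak M = (\fm, x_1-a_1,\ldots,x_n-a_n)$, and climb back up along this regular sequence of linear forms using Lemma \ref{first lemma}(4). The only cosmetic difference is that you first split $R$ into local factors via Lemma \ref{first lemma}(2), whereas the paper instead invokes Corollary \ref{cor maxmin} to get that $R_\fm$ has finite Ext-index; both handle the same point, and your explicit chain $S_0,\ldots,S_n$ just makes the iteration of Lemma \ref{first lemma}(4) explicit.
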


\begin{proof} 
By Lemma \ref{finite Krull dim}, it is enough to prove that $R[x_1,...,x_n]_{\frak M}$ is of finite Ext-index for every maximal ideal $\frak M$ of $R[x_1,...,x_n]$.
Since $R$ is Artinian, we see that $\frak M \cap R=\fm$ is a maximal ideal of  $R$ and  $R/\fm$ is an algebraically closed field. 
Therefore, by Hilbert's Nullstellensatz, there are elements $r_1, \ldots , r_n \in R$  with  $\frak M = (\fm , x_1-r_1, \ldots, x_n -r_n) R[x_1, \ldots , x_n]$.  Since $R _{\fm} \cong R[x_1,...,x_n]_{\frak M} / (x_1-r_1,...,x_n-r_n)R[x_1,...,x_n]_{\frak M}$  is of finite Ext-index by Corollary \ref{cor maxmin} and since  $\{ x_1-r_1,...,x_n-r_n\}$  is a regular sequence contained in the Jacobson radical of  $R[x_1,...,x_n]_{\frak M}$,  it follows from Lemma \ref{first lemma}(4) that $R[x_1,...,x_n]_{\frak M}$  is of  finite Ext-index. 
\end{proof}

\section{Trivial Extensions}

Let $M$ be an $R$-module. 
Recall that the trivial extension  $R(M)$  of $R$  by $M$  is defined to be  $R\oplus M$  as an underlying  $R$-module that is equipped with ring structure by defining the multiplication by 
$$
(r,m)\cdot (r',m')=(rr',rm'+r'm).
$$
There are ring homomorphisms $\rho: R\longrightarrow R(M)$ with $\rho(r)=(r,0)$ and $\pi: R(M)\longrightarrow R$ with $\pi(r,m)=r$. 
Note that  $\pi\cdot\rho$ is the identity mapping on $R$. 

In this section we are mainly concerned with the trivial extension  $R(k)$  of the local ring  $(R, \fm, k)$  by the residue class field  $k$. 
We prove the following theorem as a main result of this section. 

\begin{theorem}\label{trivial ext} 
Let $(R, \fm, k)$  be a local ring and $M$, $N$ be nonzero non-free finitely generated $R(k)$-modules. 
Then  $\Tor_n^{R(k)} (M,N) \neq 0$  for all $n\geq 3$.
\end{theorem}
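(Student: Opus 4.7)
The plan is to exploit the specific structure of $S := R(k)$ to show that the second syzygy of any non-free finitely generated $S$-module has the residue field as a direct summand. First I would record the basic structural facts about $S$: it is local with maximal ideal $\fn = \fm \oplus k$ and contains the ideal $I := 0 \oplus k$ satisfying $\fn I = 0$, so $I \cong k$ as $S$-modules. In particular $I$ lies in the socle, $\depth S = 0$, and by Auslander--Buchsbaum any non-free finitely generated $S$-module $L$ has $\pd_S L = \infty$, so all its Betti numbers are positive and $\Tor_i^S(k, L) \cong k^{\beta_i^S(L)}$ is nonzero for every $i \geq 0$.

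The key structural observation I would establish is the internal direct-sum decomposition of $S$-submodules
\[
\fn F \;=\; \fm F \;\oplus\; I F
\]
valid for any free $S$-module $F = S^b$. The sum equals $\fn F$ because $\fn = \fm S + I$; the intersection vanishes because $\fm \cdot I = 0$ forces $\fm F$ to sit in the ``$R$-part'' of $F \cong R^b \oplus k^b$ and $I F$ to sit in the ``$k$-part''. I would then take a minimal free resolution $F_\bullet \to M$ and examine $\Omega^2 M = \ker(d_1\colon F_1 \to F_0) \subseteq \fn F_1$. The crux is that $I F_1 \subseteq \Omega^2 M$, since
\[
d_1(I F_1) \;\subseteq\; I \cdot d_1(F_1) \;\subseteq\; I \cdot \fn F_0 \;=\; 0.
\]
Combined with the decomposition of $\fn F_1$, this yields the $S$-module splitting
\[
\Omega^2 M \;=\; (\Omega^2 M \cap \fm F_1) \;\oplus\; I F_1,
\]
with $I F_1 \cong k^{\,b_1}$ where $b_1 = \beta_1^S(M) \geq 1$ (positive since $M$ is non-free).

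To finish, for $n \geq 3$ I would iterate the standard syzygy shift to obtain
\[
\Tor_n^S(M, N) \;\cong\; \Tor_{n-2}^S(\Omega^2 M, N) \;\supseteq\; \Tor_{n-2}^S(k, N)^{b_1},
\]
and the right-hand side is nonzero by the opening observation applied to the non-free module $N$, since $n-2 \geq 1$. The only real obstacle is upgrading the inclusion $I F_1 \subseteq \Omega^2 M$ to a direct-summand splitting, which is exactly what the decomposition $\fn F = \fm F \oplus I F$ provides. This decomposition uses in an essential way that we have extended $R$ by its own residue field, since the separation of $\fm F$ from $I F$ depends on the identity $\fm \cdot I = 0$.
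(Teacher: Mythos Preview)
Your argument is correct. The decomposition $\fn F = \fm F \oplus I F$ is exactly right (inside $F \cong R^{b}\oplus k^{b}$ one has $\fm F = \fm^{b}\oplus 0$ and $IF = 0\oplus k^{b}$), the inclusion $IF_{1}\subseteq\Omega^{2}M$ follows from $I\fn=0$ and minimality, and the modular law then gives $k^{\beta_{1}}$ as a direct summand of $\Omega^{2}M$. The dimension shift $\Tor_{n}^{S}(M,N)\cong\Tor_{n-2}^{S}(\Omega^{2}M,N)$ is valid precisely for $n\ge 3$, and the nonvanishing of $\Tor_{n-2}^{S}(k,N)$ for non-free $N$ finishes it.

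Your route, however, is genuinely different from the paper's. The paper first passes to the first syzygies of $M$ and $N$ so that both become $R$-modules (via $R\cong S/xS$), and then invokes a derived-category splitting $R\otimes_{S}^{\mathbf L}R \cong R \oplus (R\otimes_{S}^{\mathbf L}k)[1]$ to obtain, for $R$-modules, a decomposition
\[
\Tor_{n}^{S}(M,N)\;\cong\;\Tor_{n}^{R}(M,N)\ \oplus\ \coprod_{i+j=n-1}\Tor_{i}^{S}(M,k)\otimes_{k}\Tor_{j}^{R}(k,N),
\]
from which the theorem follows by looking at the summand with $j=0$. Your argument is more elementary and self-contained: it avoids derived categories altogether and extracts the needed copy of $k$ directly from the syzygy via the socle identity $I\fn=0$. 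What the paper's approach buys in return is the full $\Tor$-decomposition above, which immediately yields the Poincar\'e-series identity $P_{R(k)}(t)=P_{R}(t)\bigl(1-P_{R}(t)\,t\bigr)^{-1}$; your method proves the theorem cleanly but does not give that extra structural information.
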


The following is a key to prove the theorem. 
Notice that any $R$-module can be regarded as an $R(k)$-module through  $\pi : R(k) \to R$. 

\begin{lemma}\label{key lemma} 
Let $(R, \fm, k)$  be a local ring. 
Then for $R$-modules $M$ and $N$ and for an integer $n\geq 1$  we have an isomorphism
$$
\Tor_n^{R(k)} (M,N) \cong  \Tor_n^R(M,N) \oplus \coprod_{i+j=n-1} \Tor _i^{R(k)} (M,k) \otimes_k  \Tor _j^R(k,N).
$$
\end{lemma}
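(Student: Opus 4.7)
The plan is to prove the formula by induction on $n \geq 1$. Set $S = R(k)$ and let $\epsilon = (0,1) \in S$, so that $\epsilon^2 = 0$ and $\fm\epsilon = 0$. I will assume $M$ is finitely generated; the general case then follows by writing $M$ as a filtered colimit of its finitely generated submodules and using that $\Tor$ commutes with filtered colimits in the first variable. Two basic observations are used throughout. First, if $L$ and $N$ are $R$-modules regarded as $S$-modules via $\pi$, then $\epsilon$ acts as zero on both, so the $S$-tensor relations reduce to the $R$-tensor relations and $L \otimes_S N = L \otimes_R N$. Second, taking a minimal free $R$-presentation $R^{\mu} \twoheadrightarrow M$ (so $\mu = \dim_k M/\fm M$ and $\Omega^1_R M \subseteq \fm R^{\mu}$), the induced $S$-surjection $S^{\mu} \twoheadrightarrow M$ has kernel $\Omega^1_S M$, and the decomposition $S^\mu = R^\mu \oplus \epsilon R^\mu$ restricts to an $S$-module isomorphism $\Omega^1_S M \cong \Omega^1_R M \oplus k^\mu$, with $\epsilon$ annihilating both summands: the $k^\mu$-summand because $\epsilon^2 = 0$, and $\Omega^1_R M$ because $\fm\epsilon = 0$ together with $\Omega^1_R M \subseteq \fm R^\mu$.

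For the base case $n = 1$, I would apply $\Tor^S_\bullet(-, N)$ to the short exact sequence $0 \to \Omega^1_S M \to S^\mu \to M \to 0$. Using the two observations, the tail of the long exact sequence becomes
$$
0 \to \Tor^S_1(M, N) \to (\Omega^1_R M \oplus k^\mu) \otimes_R N \to N^\mu \to M \otimes_R N \to 0,
$$
where the component $(N/\fm N)^\mu \to N^\mu$ is zero, since the inclusion $\epsilon R^\mu \hookrightarrow S^\mu$ becomes zero upon tensoring with $N$ (as $\epsilon$ annihilates $N$). Comparison with the analogous $R$-sequence extracts $\Tor^S_1(M, N) \cong \Tor^R_1(M, N) \oplus (N/\fm N)^\mu$, which matches the claim because $(N/\fm N)^\mu \cong \Tor^S_0(M, k) \otimes_k \Tor^R_0(k, N)$.

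For the inductive step $n \geq 2$, the same short exact sequence gives $\Tor^S_n(M, N) \cong \Tor^S_{n-1}(\Omega^1_S M, N) \cong \Tor^S_{n-1}(\Omega^1_R M, N) \oplus \Tor^S_{n-1}(k, N)^\mu$ by additivity of Tor. Applying the inductive hypothesis to both $R$-modules $\Omega^1_R M$ and $k$ produces the $R$-Tor pieces $\Tor^R_{n-1}(\Omega^1_R M, N) \cong \Tor^R_n(M, N)$ and $\Tor^R_{n-1}(k, N)^\mu$ -- which will become the first summand and the $i=0$ summand of the desired formula respectively -- together with many correction terms of the form $\Tor^S_i(-, k) \otimes_k \Tor^R_j(k, N)$ with $i + j = n - 2$. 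The reassembly uses the identity $\Tor^S_i(M, k) \cong \Tor^S_{i-1}(\Omega^1_R M, k) \oplus \Tor^S_{i-1}(k, k)^\mu$ for $i \geq 1$, which is a special case of the same computation with $N = k$; applying this identity on the right-hand side of the claim and reindexing by $i' = i - 1$ matches the summands precisely with those produced by the inductive hypothesis.

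The main obstacle I anticipate is the combinatorial bookkeeping in the inductive step: verifying that the two separate applications of the inductive hypothesis (to $\Omega^1_R M$ and to $k$, with the $\mu$-fold copies from the second) reassemble term-by-term, via the recursive identity for $\Tor^S_i(M, k)$, into the single double sum $\bigoplus_{i+j=n-1} \Tor^S_i(M, k) \otimes_k \Tor^R_j(k, N)$. The recursive structure of the formula is exactly tuned to make this work, but explicitly carrying out the matching -- and handling the $i = 0$ contribution correctly -- is the technical heart of the argument.
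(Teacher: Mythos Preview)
Your induction argument is correct; the combinatorial reassembly in the inductive step does work out exactly as you anticipate, since expanding each $\Tor^S_i(M,k)$ for $i\geq 1$ via the syzygy identity $\Tor^S_i(M,k)\cong \Tor^S_{i-1}(\Omega^1_R M,k)\oplus \Tor^S_{i-1}(k,k)^{\mu}$ and reindexing matches term-by-term with what the two applications of the inductive hypothesis produce, and the $i=0$ term $k^{\mu}\otimes_k \Tor^R_{n-1}(k,N)$ is precisely the stray $\Tor^R_{n-1}(k,N)^{\mu}$.

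Your route, however, is quite different from the paper's. The paper gives a single derived-category argument: writing $A=R(k)$, the short exact sequence $0\to k\xrightarrow{\epsilon} A\to R\to 0$ yields a triangle
\[
R\otimes_A^{\mathbf L}k \longrightarrow R \overset{\pi}{\longrightarrow} R\otimes_A^{\mathbf L}R \longrightarrow R\otimes_A^{\mathbf L}k[1]
\]
in $\D^{-}(R,A)$, and the augmentation $R\otimes_A^{\mathbf L}R\to \HH_0(R\otimes_A^{\mathbf L}R)\cong R$ is a left inverse to $\pi$, so the triangle splits: $R\otimes_A^{\mathbf L}R\cong R\oplus (R\otimes_A^{\mathbf L}k)[1]$. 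Applying $M\otimes_R^{\mathbf L}(-)\otimes_R^{\mathbf L}N$ and taking homology gives the formula for all $n$ at once, with K\"unneth over $k$ producing the double sum.

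The trade-off: the paper's proof is conceptual, handles arbitrary $R$-modules directly with no finiteness or colimit reduction, and isolates the single fact driving everything (the split triangle). Your proof is entirely elementary---no derived categories---but pays for this with the minimal-presentation maneuver (forcing the reduction to finitely generated $M$, since you need $\Omega^1_R M\subseteq \fm R^{\mu}$ to make $\epsilon$ kill the syzygy) and the inductive bookkeeping. Both ultimately exploit the same structural point, namely that the $A$-module $R$ behaves well because $\epsilon$ annihilates it, but the paper packages this as one splitting rather than iterating dimension shifts.
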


\begin{proof}
Set   $A = R(k)$, $x=(0,1)\in A$ and let $\fn$ be the maximal ideal of $A$. 
Note that $\fn = (0:_Ax)$ holds and there is an isomorphism  $R \cong A/Ax$ as a ring. 
Now consider the short exact sequence of $A$-modules: 
$$
0\longrightarrow k\overset{x}\longrightarrow A\longrightarrow R\longrightarrow 0.
$$
This induces the triangle 
$$
\begin{CD}
R \otimes_A^{\mathbf{L}} k @>>> R \otimes_A^{\mathbf{L}} A 
@>{\pi}>>  R \otimes_A^{\mathbf{L}}R @>>>  R \otimes_A^{\mathbf{L}}k[1], 
\end{CD}
$$
which is actually a triangle in the derived category ${\D}^{-}(R,A)$ of right bounded chain complexes of $(R, A)$-bimodules. 
Consider the natural augmentation  $\epsilon: R \otimes_A^{\mathbf{L}} R \longrightarrow \HH _0(R\otimes_A^{\mathbf{L}}R)$ and we have the following commutative diagram in  ${\D}^{-}(R,A)$:
$$
\begin{CD}
R \otimes_A^{\mathbf{L}} A @>{\pi}>> R \otimes_A^{\mathbf{L}}R \\
 @V{\cong}VV    @V{\epsilon}VV \\  
R @>{\cong}>> \HH _0(R\otimes_A^{\mathbf{L}}R).
\end{CD}
$$
Thus $\pi$ has a left inverse $\epsilon$. 
Therefore the triangle splits off and it gives an isomorphism in  $\D^{-}(R,A)$: 
$$
(*) \qquad R \otimes_A^{\mathbf{L}}R  \cong  \ R\  \oplus \ (R\otimes_A^{\mathbf{L}}k)[1]. 
$$

Note that the natural ring homomorphism  $\rho : R \to A$  induces the forgetting functor  $\D^{-}(R,A) \to \D^{-} (R,R)$, and through this functor we can regard $(*)$ as an isomorphism in  $\D ^{-}(R,R)$.  
Now apply the functors $M\otimes_R^{\mathbf{L}}-$ from the left and $-\otimes_R^{\mathbf{L}}N$ from the right to the isomorphism $(*)$, and as a consequence we have an isomorphism in ${\D}^{-}(R,R)$: 
$$
M \otimes_A^{\mathbf{L}}N \cong  (M  \otimes_R^{\mathbf{L}} N) \ \oplus \ \left( (M\otimes_A^{\mathbf{L}}k) \otimes_k^{\mathbf{L}}(k\otimes_R^{\mathbf{L}}N)\right)[1]. 
$$
The lemma follows by taking the homology modules of the both ends. 
\end{proof}

\begin{remark} 
Let $S$ be a local ring with residue class field $\ell$ and let $M$, $N$ be $S$-modules such that $\ell_S( \Tor _n^S(M, N))< \infty$ for all $n$. 
Then we can consider the generating function  $P^S_{M,N}(t)$  defined by the equality  
$$
P^S_{M,N}(t) = \sum_{n\geq 0} \ell_S( \Tor _n^S (M,N))t^n.
$$
Recall that the Poincar\'{e} series $P^S_M(t)$  of $M$ is defined to be $P^S_{\ell ,M}(t)$ and the Poincar\'{e} series $P^S_{\ell}(t)$ is denoted  simply by $P_S(t)$.

Note that by the previous lemma we can show the equality 
$$
P^{R(k)}_{M,N}(t) = P^R_{M,N} (t) + P^{R(k)}_M (t)\  P^R_N (t) \ t, 
$$
if  $M$ and $N$ are finitely generated $R$-modules with 
 $\ell_{R(k)}( \Tor _n^{R(k)}(M, N))< \infty$ for all $n$. 
Applying this to $M=N=k$, we have 
$$
P_{R(k)}(t)=P_R(t)(1 - P_R(t)\ t)^{-1}, 
$$
which is a special case of a theorem of Gulliksen \cite[Theorem 2]{G}. 
\end{remark}

Now we proceed to the proof of the theorem.

\vspace{12pt}
\textit{Proof of Theorem \ref{trivial ext}.} 
We use the same notation as in the proof of Lemma \ref{key lemma}. 
Suppose $\Tor _n^A(M,N)=0$  for some $n\geq 3$. 
Replacing $M$ and $N$ with their first syzygies, we may assume that $\Tor _n^A(M,N)=0$  for some $n\geq 1$ and that $xM=0$ and  $xN=0$,  since  $M \subseteq \fn F$ and  $N \subseteq \fn G$ for some free $A$-modules $F$ and $G$. 
Thus we may assume $M$ and $N$ are modules over $R$ through the identification   $R \cong A /Ax$. 
Then by Lemma \ref{key lemma},  the equality $\Tor _{n-1}^A (M,k) \otimes_k (N\otimes_Rk)=0$ holds. 
Since  $N \otimes_R k \neq 0$, we see $\Tor _{n-1}^A (M,k)=0$. 
This implies that $M$ has finite projective dimension as an $A$-module. 
But $\depth (A)=0$ and by Auslander-Buchsbaum formula, $M$ is a free $A$-module. This is a contradiction. 
$\Box$

\vspace{12pt}
As applications of Theorem \ref{trivial ext} we can show the following corollaries.

\begin{corollary}\label{first cor} 
Let  $(R, \fm, k)$  be an Artinian local ring and let  $M$ and  $N$ be finitely generated modules over the trivial extension $R(k)$. 
If  $\Ext_{R(k)}^i (M, N) = 0$  for some integer $i \geq 3$, then either $M$ is $R(k)$-free or  $N$  is $R(k)$-injective. 

In particular, the equality  $\Extindex (R(k)) =0$ holds. 
\end{corollary}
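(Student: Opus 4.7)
My plan is to reduce Corollary \ref{first cor} to Theorem \ref{trivial ext} via Matlis duality. Set $A := R(k)$, which is itself Artinian local, and let $E$ denote the injective envelope of its residue field. The Matlis dual $(-)^{\vee} := \Hom_A(-, E)$ then restricts to a contravariant self-duality on the category of finitely generated $A$-modules (over an Artinian ring ``finitely generated'' coincides with ``Artinian''). For finitely generated $A$-modules, freeness and injectivity are swapped by this duality: $M$ is $A$-free if and only if $M^{\vee}$ is $A$-injective, and dually $N$ is $A$-injective if and only if $N^{\vee}$ is $A$-free.

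The translation from $\Ext$ to $\Tor$ comes from the Hom--Tor adjunction applied to the injective module $E$, together with Matlis reflexivity $N \cong N^{\vee\vee}$:
\[
\Ext_A^i(M, N) \;\cong\; \Ext_A^i(M, N^{\vee\vee}) \;\cong\; \Hom_A\bigl(\Tor_i^A(M, N^{\vee}), E\bigr).
\]
Since $E$ is a faithful injective cogenerator, this identifies the vanishing loci: $\Ext_A^i(M, N) = 0$ if and only if $\Tor_i^A(M, N^{\vee}) = 0$.

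Now assume $M$ and $N$ are nonzero finitely generated $A$-modules with $\Ext_A^i(M, N) = 0$ for some $i \geq 3$. Then $\Tor_i^A(M, N^{\vee}) = 0$ for the same $i \geq 3$, with $N^{\vee}$ nonzero (since Matlis duality is faithful). The contrapositive of Theorem \ref{trivial ext} forces one of $M$ or $N^{\vee}$ to be $A$-free; by the dictionary above, this reads as ``$M$ is $R(k)$-free, or $N$ is $R(k)$-injective'', which is the first assertion. For the $\Extindex$ claim, observe that in either alternative $\Ext_A^i(M, N) = 0$ for \emph{every} $i \geq 1$. Thus any pair of nonzero finitely generated modules with $p^{R(k)}(M, N) < \infty$ already has some $\Ext^i$ vanishing with $i \geq 3$, and so in fact $p^{R(k)}(M, N) \leq 0$; consequently $\Extindex (R(k)) = 0$.

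The only substantive step is setting up the Matlis-duality dictionary carefully — in particular the identifications $N^{\vee\vee} \cong N$ and ``free $\leftrightarrow$ injective'' for finitely generated modules over the Artinian local ring $A$ — together with the Hom--Tor isomorphism. Once that bridge is in place, Theorem \ref{trivial ext} is precisely what is needed to finish.
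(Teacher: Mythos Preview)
Your proof is correct and follows essentially the same route as the paper: apply Matlis duality to convert the $\Ext$-vanishing into a $\Tor$-vanishing and then invoke Theorem~\ref{trivial ext}. The paper's argument is extremely terse (two sentences), and you have simply spelled out the underlying facts about Matlis duality over an Artinian local ring and added an explicit justification for the ``In particular'' clause, which the paper leaves to the reader.
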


\begin{proof}
Taking the Matlis dual which we denote by $(\ \ )^{\vee}$, we have $\Tor^{R(k)}_i(M, N^\vee) = 0$ for an integer $i \geq 3$. 
Hence by Theorem \ref{trivial ext},  one of $M$ and $N^{\vee}$  is $R(k)$-free. 
\end{proof}

\begin{corollary} 
Let  $(R, \fm, k)$ be an Artinian local ring. 
And let $E=E_{R(k)}(k)$ be the injective envelope of the $R(k)$-module $k$. 
Then  $\Ext^i_{R(k)} (E, R(k)) \neq 0$ for all $i \geq 3$.
\end{corollary}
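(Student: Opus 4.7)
The plan is to apply Corollary~\ref{first cor} with $M = E$ and $N = R(k)$, arguing by contradiction. Set $A = R(k)$ for brevity; since $A$ is Artinian, both $E = E_A(k)$ and $A$ are finitely generated nonzero $A$-modules, so the corollary applies.

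Suppose $\Ext^i_A(E, A) = 0$ for some $i \geq 3$. Corollary~\ref{first cor} then gives two alternatives: either $E$ is $A$-free, or $A$ is $A$-injective. Since $E$ is the injective hull of the simple module $k$, it is indecomposable, so freeness over the local ring $A$ would force $E \cong A$; but then $A$ itself is the injective hull of $k$, and in particular $A$ is injective. Thus in either case $A$ is self-injective, which for an Artinian local ring is equivalent to $A$ being Gorenstein.

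To derive a contradiction I would compute the socle of $A$ directly from the multiplication $(r,m)(r',m') = (rr', \bar r m' + \bar{r'} m)$ in the trivial extension $R(k)$. A short calculation yields
\[
\mathrm{soc}_A(A) \;=\; \bigl( (0:_R \fm) \cap \fm \bigr) \oplus k,
\]
so whenever $\fm \neq 0$ (equivalently, $R \neq k$) the socle has $k$-dimension $\dim_k (0:_R \fm) + 1 \geq 2$. This contradicts the one-dimensional-socle characterization of a Gorenstein Artinian local ring, completing the argument.

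The main conceptual step is recognizing that both conclusions of Corollary~\ref{first cor} collapse to the self-injectivity of $A$; after that, the contradiction is a routine socle computation. The only subtlety worth flagging is the degenerate case $R = k$: there $A \cong k[\epsilon]/(\epsilon^2)$ is Gorenstein, $E \cong A$, and $\Ext^i_A(E, A) = 0$ for every $i \geq 1$, so the corollary must tacitly assume $R \neq k$.
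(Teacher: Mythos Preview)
Your proof is correct and follows the same route as the paper: invoke Corollary~\ref{first cor}, collapse both alternatives to ``$R(k)$ is Gorenstein,'' and contradict this via a socle count (the paper phrases the last step as ``the socle dimension of $R(k)$ is bigger than that of $R$ by $1$,'' which matches your formula $\mathrm{soc}_A(A)=\bigl((0:_R\fm)\cap\fm\bigr)\oplus k$ once one notes $(0:_R\fm)\subseteq\fm$ whenever $\fm\neq 0$). Your observation about the degenerate case $R=k$ is also correct and is a genuine oversight in the paper's statement: for $R=k$ the ring $R(k)\cong k[\epsilon]/(\epsilon^2)$ is Gorenstein, $E\cong R(k)$, and $\Ext^i_{R(k)}(E,R(k))=0$ for all $i\geq 1$.
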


\begin{proof}
Otherwise, it follows from Corollary \ref{first cor} that $E$ is $R(k)$-free or 
 $R(k)$  is $R(k)$-injective. 
In either case  $R(k)$  must be a Gorenstein ring. 
However, since the socle dimension of $R(k)$ is bigger than that of $R$ by $1$,  there is no chance for  $R(k)$  to be Gorenstein. 
\end{proof}

\begin{corollary}[Auslander-Reiten conjecture for $R(k)$]\label{AR} 
Let $(R, \fm, k)$ be an Artinian local ring and let $M$ be a finitely generated  module over  $R(k)$. 
Suppose that  $\Ext^i_{R(k)}(M, M \oplus R(k)) = 0$  for all  $i >0$  (or more weakly, for some integer $i \geq 3$). 
Then $M$ is a free $R(k)$-module.
\end{corollary}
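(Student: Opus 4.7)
The plan is to reduce this directly to Corollary \ref{first cor} together with the observation, already used in the corollary preceding \ref{AR}, that the trivial extension $R(k)$ is never Gorenstein.

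First I would extract from the hypothesis only what is needed. The assumption $\Ext^i_{R(k)}(M, M \oplus R(k)) = 0$ for some $i \geq 3$ implies in particular that $\Ext^i_{R(k)}(M, R(k)) = 0$ for that same $i \geq 3$. (So the summand $M$ in the second argument and the vanishing for all $i>0$ are both stronger than necessary; we only need vanishing of $\Ext^i(M, R(k))$ for a single $i \geq 3$.)

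Next I would apply Corollary \ref{first cor} with $N = R(k)$. The conclusion forces one of two alternatives: either $M$ is free over $R(k)$, or $R(k)$ is injective as a module over itself. The second alternative would mean that $R(k)$ is a self-injective (i.e., quasi-Frobenius) Artinian local ring, hence Gorenstein. However, as already observed in the corollary just above \ref{AR}, the socle dimension of $R(k)$ strictly exceeds that of $R$ (it picks up the extra copy of $k$ sitting in $R(k)$), so $R(k)$ cannot be Gorenstein. This rules out the second alternative and leaves $M$ free, as desired.

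There is essentially no obstacle: the corollary is a transparent combination of \ref{first cor} (the nontrivial content, whose proof used Theorem \ref{trivial ext} and Matlis duality) with the socle count that forbids $R(k)$ from being self-injective. I would structure the write-up in two short sentences: extracting $\Ext^i(M,R(k)) = 0$, invoking \ref{first cor}, and eliminating the injectivity alternative by the Gorenstein obstruction.
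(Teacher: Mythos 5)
Your proof is correct and essentially identical to the paper's: the paper applies Corollary \ref{first cor} with $N = M \oplus R(k)$ and notes that injectivity of that module would force $R(k)$ to be Gorenstein, which the socle count rules out, whereas you first split off the summand to get $\Ext^i_{R(k)}(M,R(k))=0$ and apply \ref{first cor} with $N=R(k)$ --- a purely cosmetic difference. The key ingredients (Corollary \ref{first cor} plus the non-Gorenstein-ness of $R(k)$ from the preceding corollary) are exactly those used in the paper.
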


\begin{proof}
By Corollary \ref{first cor},  either  $M$ is $R(k)$-free or $M \oplus R(k)$ is  $R(k)$-injective. 
In the latter case $R(k)$ has to be a Gorenstein ring. 
However this never occurs as we have already remarked in the proof of the previous corollary. 
\end{proof}

\section{More ring extensions}

Let $R$  be an algebra over a field $k$. 
And let $M$ be a module over the polynomial ring $R[x]$. 
The {\it specialization of $M$ to an element  $\alpha \in k$} is defined by
$$
M_{\alpha} := M \otimes_{k[x]}({k[x]}/{(x-\alpha)k[x]}).
$$
Remark that if $M$ is a finitely generated $R[x]$-module, then $M_{\alpha}$ is a finitely generated $R$-module.

\begin{lemma}\label{specialization lemma} 
Let  $R$ be a $k$-algebra and let  $\alpha\in k$. 
Assume that $x-\alpha$ is a nonzero divisor on $R[x]$-modules $M$ and $N$. 
Then we have an exact sequence
$$
0 \to \Ext^i_{R[x]}(M,N)_{\alpha}  \to \Ext^i_R(M_{\alpha},N_{\alpha}) \to \Tor_1^{k[x]}(\Ext^{i+1}_{R[x]}(M,N),\ {k[x]}/{(x-\alpha)}) \to 0 \\
$$
for each $i\geq 0$.
\end{lemma}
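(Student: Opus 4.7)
The plan is to derive the sequence in two independent steps, using both halves of the hypothesis. First, the regularity of $x-\alpha$ on $N$ gives the connecting maps of a long exact $\Ext$ sequence; second, the regularity of $x-\alpha$ on $M$ lets us identify the middle term of that sequence with $\Ext^i_R(M_\alpha, N_\alpha)$ via a change-of-rings argument.

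Concretely, the short exact sequence $0 \to N \xrightarrow{x-\alpha} N \to N_\alpha \to 0$ of $R[x]$-modules, after applying $\Hom_{R[x]}(M,-)$, produces a long exact $\Ext$ sequence whose connecting maps are multiplication by $x-\alpha$. Splitting this into short exact sequences yields, for every $i \geq 0$,
$$0 \to \Ext^i_{R[x]}(M,N)/(x-\alpha)\Ext^i_{R[x]}(M,N) \to \Ext^i_{R[x]}(M,N_\alpha) \to (0:_{\Ext^{i+1}_{R[x]}(M,N)}(x-\alpha)) \to 0.$$
By the definition of specialization the left-hand term is $\Ext^i_{R[x]}(M,N)_\alpha$, and from the free $k[x]$-resolution $0 \to k[x] \xrightarrow{x-\alpha} k[x] \to k[x]/(x-\alpha) \to 0$ one reads off that the right-hand term is $\Tor_1^{k[x]}(\Ext^{i+1}_{R[x]}(M,N), k[x]/(x-\alpha))$. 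Thus only the identification of the middle term with $\Ext^i_R(M_\alpha,N_\alpha)$ remains.

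For this I would fix a free resolution $P_\bullet \to M$ over $R[x]$. Since $x-\alpha$ is regular on each $P_n$ (being free over $R[x]$) and on $M$, the Koszul resolution $0 \to R[x] \xrightarrow{x-\alpha} R[x] \to R \to 0$ over $R[x]$ implies $\Tor_j^{R[x]}(R,M) = 0$ for all $j \geq 1$; consequently $P_\bullet \otimes_{R[x]} R$ is a free resolution of $M_\alpha$ over $R = R[x]/(x-\alpha)$. Because $N_\alpha$ is annihilated by $x-\alpha$, the standard tensor-hom adjunction provides a natural isomorphism of complexes $\Hom_R(P_\bullet \otimes_{R[x]} R,\, N_\alpha) \cong \Hom_{R[x]}(P_\bullet, N_\alpha)$, and taking cohomology gives $\Ext^i_R(M_\alpha, N_\alpha) \cong \Ext^i_{R[x]}(M, N_\alpha)$. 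Inserting this into the short exact sequence from the previous paragraph produces the claimed sequence. The main technical point is this change-of-rings isomorphism, which is precisely where the hypothesis on $M$ enters; everything else is a formal manipulation of long exact sequences and the $k[x]$-resolution of $k[x]/(x-\alpha)$.
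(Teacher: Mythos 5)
Your proposal is correct and follows essentially the same route as the paper: apply $\Hom_{R[x]}(M,-)$ to $0 \to N \xrightarrow{x-\alpha} N \to N_\alpha \to 0$, break the long exact sequence at the multiplication maps, identify the cokernel with the specialization and the kernel with $\Tor_1^{k[x]}$ via the Koszul resolution of $k[x]/(x-\alpha)$, and use the change-of-rings isomorphism $\Ext^i_{R[x]}(M,N_\alpha)\cong\Ext^i_R(M_\alpha,N_\alpha)$. The only difference is that you spell out this last isomorphism (the paper dismisses it with ``one can easily see''), and your resolution-plus-adjunction argument for it is the standard and correct one.
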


\begin{proof}
From the obvious exact sequence 
$$
\begin{CD}
0 @>>> N @>{x-\alpha}>> N @>>>  N_{\alpha} @>>> 0 \\
\end{CD}
$$
we have a long exact sequence
$$
\Ext_{R[x]}^i(M,N) \overset{f_i}\rightarrow \Ext_{R[x]}^i(M,N) \to \Ext_{R[x]}^i(M,N_{\alpha}) \to \Ext_{R[x]}^{i+1}(M,N) \overset{f_{i+1}}\rightarrow \Ext_{R[x]}^{i+1}(M,N), 
$$
where each $f_i$  is a multiplication mapping by  $x-\alpha$. 
Since  $x-\alpha$ is a nonzero divisor on $M$, one can easily see that 
$\Ext_{R[x]}^i (M, N_{\alpha}) \cong \Ext _R^i ( M_{\alpha}, N_{\alpha})$. 
Thus it results the exact sequence
$$
0\longrightarrow \coker (f_i) \longrightarrow \Ext _R^i (M_{\alpha},N_{\alpha})\longrightarrow \ker (f_{i+1}) \longrightarrow 0.
$$
By the definition of specialization it is easy to verify that 
$\coker (f_i) =  \Ext_{R[x]}^i (M,N)_{\alpha}$, and 
$\ker (f_{i+1}) \cong \Tor _1^{k[x]} (\Ext^{i+1}_{R[x]}(M,N), k[x]/(x-\alpha))$
\end{proof}

Related to Conjecture (E) in Section 1, we are now able to the following theorem.

\begin{theorem}\label{k(x)} 
Suppose that $k$ is an uncountable field and $R$ is a finite dimensional $k$-algebra of finite Ext-index. 
Let $k(x)$  be a transcendental extension of $k$. 
Then $R \otimes_k k(x)$ is also of finite Ext-index.
More precisely,  the inequality $\Extindex (R \otimes _{k} k(x)) \leq \Extindex (R)$  holds.
\end{theorem}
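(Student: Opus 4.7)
The plan is to descend to $R[x]$-modules and apply the specialization sequence of Lemma \ref{specialization lemma} at a carefully chosen value $\alpha \in k$. Set $S := R[x] = R \otimes_k k[x]$; since $R$ is finite dimensional over $k$, $S$ is finite over $k[x]$, and $R \otimes_k k(x)$ is its localization $S \otimes_{k[x]} k(x)$. Given finitely generated $R \otimes_k k(x)$-modules $M'$ and $N'$ with $n_0 := p^{R \otimes_k k(x)}(M', N') < \infty$, I clear denominators in finite presentations to produce finitely generated $S$-modules $M$ and $N$ with $M' \cong M \otimes_{k[x]} k(x)$ and $N' \cong N \otimes_{k[x]} k(x)$. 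Put $E_i := \Ext^i_S(M, N)$; since $S$ is Noetherian and finite over $k[x]$, each $E_i$ is a finitely generated $k[x]$-module. Flat base change gives $E_i \otimes_{k[x]} k(x) \cong \Ext^i_{R \otimes_k k(x)}(M', N')$, so the hypothesis means $E_i$ is $k[x]$-torsion, hence of finite length over $k[x]$, for every $i > n_0$.

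Next I select a suitable $\alpha \in k$. The set of $\alpha$ for which $x - \alpha$ fails to be a nonzero divisor on $M$ or on $N$ is finite, since $\Ass_{k[x]}(M)$ and $\Ass_{k[x]}(N)$ are finite sets of primes of $k[x]$ and only finitely many among them have the form $(x - \alpha)$. For each $i > n_0$, the torsion module $E_i$ is supported on finitely many maximal ideals of $k[x]$, so only finitely many $\alpha$ satisfy $(x - \alpha) \in \Supp_{k[x]}(E_i)$. The union of all these bad sets (indexed by finitely many conditions on $M, N$ and countably many $i > n_0$) is countable, and uncountability of $k$ lets me pick $\alpha$ avoiding all of them. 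For this $\alpha$, Lemma \ref{specialization lemma} applies; moreover $(E_i)_\alpha = 0$ and $\Tor_1^{k[x]}(E_{i+1}, k[x]/(x - \alpha)) = 0$ for every $i > n_0$, so the specialization exact sequence collapses to give $\Ext^i_R(M_\alpha, N_\alpha) = 0$ for all $i > n_0$.

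Now $M_\alpha$ and $N_\alpha$ are finitely generated $R$-modules via $S/(x - \alpha)S \cong R$, with $p^R(M_\alpha, N_\alpha) \leq n_0 < \infty$. The hypothesis $\Extindex(R) < \infty$ upgrades this to $\Ext^i_R(M_\alpha, N_\alpha) = 0$ for all $i > \Extindex(R)$. Plugging back into the specialization sequence gives the embedding $(E_i)_\alpha \hookrightarrow \Ext^i_R(M_\alpha, N_\alpha) = 0$, so $(E_i)_\alpha = 0$ for every $i > \Extindex(R)$. By the structure theorem over the PID $k[x]$, write $E_i \cong k[x]^{r_i} \oplus T_i$ with $T_i$ torsion; since $(E_i)_\alpha$ contains $k^{r_i}$ as a direct summand, $r_i = 0$. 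Thus $E_i$ is $k[x]$-torsion for every $i > \Extindex(R)$, giving $\Ext^i_{R \otimes_k k(x)}(M', N') = E_i \otimes_{k[x]} k(x) = 0$. Hence $p^{R \otimes_k k(x)}(M', N') \leq \Extindex(R)$, and taking the supremum over pairs $(M', N')$ yields the stated inequality.

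The main obstacle is coordinating a single $\alpha \in k$ to satisfy all the required genericity conditions at once. Because vanishing of Ext is a statement in infinitely many degrees, one must avoid the supports of infinitely many modules $E_i$ simultaneously with the bad loci for $M$ and $N$; uncountability of $k$ is exactly what lets a countable union of finite sets fail to cover $k$. A secondary subtlety is that the vanishing of $(E_i)_\alpha$ at a single such generic $\alpha$ already forces $E_i$ to be $k[x]$-torsion, which uses the PID structure of $k[x]$ essentially.
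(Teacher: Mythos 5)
Your proof is correct and follows essentially the same route as the paper: descend $M'$, $N'$ to finitely generated $R[x]$-modules, exploit that the modules $\Ext^i_{R[x]}(M,N)$ are finitely generated over $k[x]$, use uncountability of $k$ to pick a single $\alpha$ avoiding countably many bad loci, specialize via Lemma \ref{specialization lemma}, invoke $\Extindex(R)$, and lift back through the injection $(E_i)_\alpha \hookrightarrow \Ext^i_R(M_\alpha,N_\alpha)$ together with the structure theorem over $k[x]$. The only (harmless) cosmetic differences are that you clear denominators and then exclude finitely many $\alpha$ to make $x-\alpha$ a nonzero divisor, where the paper takes $R[x]$-submodules of $M'$, $N'$ so this holds for every $\alpha$, and you phrase the genericity in terms of supports rather than roots of the invariant factors $f_{ij}$.
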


\begin{proof}
Set $b = \Extindex (R)$ and let $M'$ and $N'$ be finitely generated 
$R\otimes_k k(x)$-modules satisfying  $\Ext_{R\otimes_k k(x)}^i(M',N')=0$ for $i\gg 0$. 
We only have to show that $\Ext_{R\otimes_k k(x)}^i(M',N')=0$ for $i > b$. 

Note that  $R \otimes _k k(x)$ is just a localization of  $R[x]$  by a multiplicatively closed subset  $k[x]\backslash \{0\}$. 
Hence we can choose a finitely generated $R[x]$-submodule  $M$  of  $M'$  (resp. $N$ of $N'$) so that  $M \otimes_{k[x]}k(x) \cong M'$ (resp. $N \otimes_{k[x]}  k(x) \cong N'$). 
Notice that  $x - \alpha$ acts on  $M$ and $N$ as a non-zero divisor for each $\alpha \in k$. 
 
Since we have an isomorphism 
$\Ext^i_{R\otimes_kk(x)}(M',N') \cong \Ext^i_{R[x]}(M,N) \otimes_{k[x]}k(x)$, 
we see that $\Ext ^i_{R[x]}(M,N)\otimes_{k[x]}k(x)=0$ for $i\gg 0$.

On the other hand, since $R$ is a finite dimensional $k$-algebra, 
each module  $\Ext _{R[x]} ^i (M, N) \ (i \geq 0)$  is a finitely generated $k[x]$-module. 
Hence it has a decomposition as a $k[x]$-module as follows: 
$$
\Ext^i_{R[x]}(M,N) \cong \bigoplus_{j=1}^{s_i} k[x]/ (f_{ij}(x)) \oplus  k[x]^{r_i}, 
$$ 
where $f_{ij}(x) \neq 0 \in k[x]$. 

Since $\Ext^i_{R[x]}(M,N) \otimes_{k[x]} k(x)$ are vanishing for $i\gg 0$, we have $r_i=0$ for $i\gg 0$.
Since there are only countably many equations  $f_{ij}(x)$, we can find   an element  $\alpha\in k$ with the property $f_{ij}(\alpha)\neq 0$ for all $i, j$. 
Then,  since $x-\alpha$ acts bijectively on $k[x]/(f_{ij}(x))$, 
we see that  $\Tor_1^{k[x]}( \Ext ^{i+1}_{R[x]} (M,N), \ k[x]/(x-\alpha))=0$ for all $i$.
And we see as well that  $\Ext^i_{R[x]} (M,N)_{\alpha}=0$ for $i\gg 0$. 
Therefore the previous lemma implies that  $\Ext^i_R(M_{\alpha},N_{\alpha})=0$ for $i\gg 0$. 
Thus, by the definition of Ext-index,  we have $\Ext ^i_R(M_{\alpha}, N_{\alpha})=0$ for all $i> b$. 
Since $\Ext^i_{R[x]}(M,N)_{\alpha}$ is a submodule of $\Ext^i_R(M_{\alpha},N_{\alpha})$ by Lemma \ref{specialization lemma}, we have Ext$^i_{R[x]}(M,N)_{\alpha}=0$ for all $i> b$. 
This implies that $r_i=0$ for $i> b$, which is equivalent to the vanishing 
$
\Ext^i_{R[x]}(M,N)\otimes _{k[x]} k(x) = 0
$
for $i >b$. 
\end{proof}

\begin{remark}\label{rem for Extgap} 
Given an integer  $t \geq 1$, suppose there is a natural number $n$  with  $\Ext^i_R(M,N)=0$ for $n+1\leq i\leq n+t$ and Ext$^j_R(M,N)\neq 0$ for $j=n, n+t+1$. 
In such a case we say that $\Ext_R(M,N)$ has a gap of length $t$. 
Set
$$
\begin{aligned}
\Extgap (R):=  \sup \{ t \in \mathbb{N} \ | \ &\text{there are finitely generated  $R$-modules  $M$ and $N$} \\
&\text{ such that  $\Ext_R(M,N)$ has a gap of length $t$} \}. \\
\end{aligned}
$$
The ring $R$ is called Ext-bounded if $\Extgap (R) < \infty$. 
We should remark from \cite[Theorem 3.4(3)]{HJ} that if  $R$  is a Gorenstein local ring that is Ext-bounded, then $R$  is of finite Ext-index. 
\end{remark}

Keeping in mind this remark, we can prove the following statement completely in a similar way to the proof of Theorem \ref{k(x)}:

\begin{theorem}\label{Extgap}
Let  $R$  be a finite dimensional $k$-algebra where $k$ is an infinite field, and let  $k(x)$  be a transcendental extension of  $k$. 
If  $R$ is Ext-bounded, then so is  $R\otimes_k k(x)$. 
More precisely the inequality  $\Extgap (R \otimes _k k(x)) \leq \Extgap (R)$ holds. 
\end{theorem}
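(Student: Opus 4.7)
The plan is to mirror the argument of Theorem \ref{k(x)} essentially verbatim, with the specialization at a generic $\alpha \in k$ doing the work of transferring gaps from $R \otimes_k k(x)$ back to $R$. Set $b = \Extgap(R)$, and let $M'$, $N'$ be finitely generated $R \otimes_k k(x)$-modules whose Ext has a gap of length $t \geq 1$ at position $n$, i.e.
$$
\Ext^i_{R\otimes_k k(x)}(M',N') = 0 \text{ for } n+1 \leq i \leq n+t, \quad \Ext^n(\,\cdot\,), \Ext^{n+t+1}(\,\cdot\,) \neq 0.
$$
The goal is to produce $R$-modules with a gap of length $t$, which will force $t \leq b$.

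First I would lift, exactly as in the proof of Theorem \ref{k(x)}, to finitely generated $R[x]$-submodules $M \subseteq M'$ and $N \subseteq N'$ with $M \otimes_{k[x]} k(x) \cong M'$ and $N \otimes_{k[x]} k(x) \cong N'$. Since $M'$ and $N'$ are $k(x)$-vector spaces, $M$ and $N$ are $k[x]$-torsion-free, so $x-\alpha$ is a non-zerodivisor on them for every $\alpha \in k$. The finite dimensionality of $R$ over $k$ guarantees that each $\Ext^i_{R[x]}(M,N)$ is a finitely generated $k[x]$-module, and the structure theorem over the PID $k[x]$ gives
$$
\Ext^i_{R[x]}(M,N) \;\cong\; \bigoplus_{j=1}^{s_i} k[x]/(f_{ij}(x)) \;\oplus\; k[x]^{r_i}.
$$
Because $\Ext^i_{R[x]}(M,N) \otimes_{k[x]} k(x) \cong \Ext^i_{R\otimes_k k(x)}(M',N')$, the gap hypothesis translates into: $r_n \neq 0$, $r_{n+t+1} \neq 0$, and $r_i = 0$ for $n+1 \leq i \leq n+t$.

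Next I would choose $\alpha \in k$ with $f_{ij}(\alpha) \neq 0$ for \emph{all} $i \in \{n+1, \ldots, n+t+1\}$ and $j \in \{1, \ldots, s_i\}$. This is only finitely many nonzero polynomials, so the mere infiniteness of $k$ (rather than uncountability as in Theorem \ref{k(x)}) suffices --- this is why the hypothesis can be relaxed here. For such an $\alpha$, multiplication by $x-\alpha$ on each $k[x]/(f_{ij}(x))$ with $i$ in the above range is bijective, so these cyclic summands are killed by the specialization $(-)_\alpha$ and contribute no $\Tor_1^{k[x]}(-, k[x]/(x-\alpha))$. Applying Lemma \ref{specialization lemma} for $i$ in the gap range, both outer terms vanish, so $\Ext^i_R(M_\alpha, N_\alpha) = 0$ for $n+1 \leq i \leq n+t$. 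At the endpoints $i = n$ and $i = n+t+1$, the free summands $k[x]^{r_i}$ specialize to the nonzero $k$-vector spaces $k^{r_i}$, which embed into $\Ext^i_{R[x]}(M,N)_\alpha$, and then into $\Ext^i_R(M_\alpha, N_\alpha)$ via the injection in Lemma \ref{specialization lemma}. Hence $\Ext_R(M_\alpha, N_\alpha)$ has a gap of length (at least) $t$ at $n$, so $t \leq \Extgap(R) = b$.

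Since $t$ was an arbitrary gap length appearing over $R \otimes_k k(x)$, this yields $\Extgap(R \otimes_k k(x)) \leq b$. The main place to be careful --- though not really hard --- is the nonvanishing at the two endpoints $n$ and $n+t+1$: one has to confirm that the free-rank contributions genuinely survive specialization, which is exactly what the injective piece of the sequence in Lemma \ref{specialization lemma} provides. Everything else is direct bookkeeping parallel to the proof of Theorem \ref{k(x)}.
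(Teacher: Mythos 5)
Your proposal is correct and follows essentially the same route as the paper: lift $M'$, $N'$ to finitely generated $R[x]$-modules, decompose each $\Ext^i_{R[x]}(M,N)$ over the PID $k[x]$, pick $\alpha\in k$ avoiding the finitely many polynomials $f_{ij}$ with $n+1\leq i\leq n+t+1$ (whence infiniteness of $k$ suffices), and transfer the gap via Lemma \ref{specialization lemma}. In fact you make explicit a step the paper leaves implicit, namely that the free ranks $r_n, r_{n+t+1}\neq 0$ survive specialization through the injection $\Ext^i_{R[x]}(M,N)_{\alpha}\hookrightarrow \Ext^i_R(M_{\alpha},N_{\alpha})$, so that $(M_{\alpha},N_{\alpha})$ genuinely exhibits a gap of length $t$ over $R$.
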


\begin{proof}
Let  $M'$  and  $N'$  be finitely generated $R \otimes _k k(x)$-modules and suppose that $\Ext _{R \otimes _k k(x)} (M', N')$  has a gap of length $t$. 
To prove  $t \leq \Extgap (R)$, we use the same notation as in the proof of Theorem \ref{k(x)}. 
First we can find finitely generated $R[x]$-modules $M$ and $N$ and an integer  $n$  satisfying $\Ext ^i_ {R[x]} (M,N) \otimes_{k[x]} k(x)=0$ for  $n+1 \leq i \leq n +t$ and  $\Ext ^j_ {R[x]} (M,N) \otimes_{k[x]} k(x)\not= 0$ for $j =  n, \ n +t+1$.
As in the proof of Theorem \ref{k(x)}, 
we decompose   $\Ext ^i_ {R[x]} (M,N)$  as $k[x]$-modules into direct sums of indecomposable ones, 
 and we have a finite number of equations  $f_{ij}(x) \ (n+1 \leq i \leq n+t+1, \  1 \leq j \leq s_i)$.
Now we choose an element  $\alpha\in k$  that is not a zero of any of these polynomials. 
Then, as in the same way as the proof of Theorem \ref{k(x)},  we can show  $\Ext ^i_R (M_{\alpha}, N_{\alpha})=0$  for $n+1 \leq i \leq n+t$. 
Thus by definition we have  $t \leq \Extgap{R}$. 
\end{proof}

\begin{remark}\label{rem for zeta}  
Let $R$ be a Cohen-Macaulay local ring with dualizing module. 
As pointed out in \cite[Observation (4.1)]{CH}, by using  maximal Cohen-Macaulay approximations,  it is easy to see that $R$ is of finite Ext-index if and only if there exists an integer $b\geq 0$, depending only on $R$, such that $P^R(M,N)\leq b$ for all maximal Cohen-Macaulay $R$-modules $M$ and $N$  with  $P^R(M,N)<\infty$. 
In fact, setting 
$$
\begin{aligned}
\zeta(R)= \sup \left\{\right. p^R(M,N) \ | \ &\text{$p^R(M,N)<\infty$ where  $M$ and $N$ are } \\
 &\text{maximal Cohen-Macaulay $R$-modules} \left.\right\}, 
\end{aligned}
$$
we can show 
$$
\zeta(R) \leq \Extindex (R) \leq \zeta (R) + d.
$$
Therefore  $R$  is of finite Ext-index if and only if  $\zeta (R) < \infty$.

Note that in the case that  $R$ ia a Gorenstein local ring, 
we have the equality  $\Extindex (R) = \zeta(R)$. 
\end{remark}

\begin{theorem}\label{zeta} 
Let  $(R, \fm, k)$ be a Cohen-Macaulay local ring of finite Ext-index. 
Assume that $R$  possesses a dualizing module and that $R$  has an uncountable coefficient field $k$. 
Then $R[x]_{\fm R[x]}$ is of finite Ext-index as well. 
More precisely, the inequality $\zeta (R[x]_{\fm R[x]}) \leq \Extindex (R)$ holds.
\end{theorem}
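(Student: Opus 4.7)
The plan is to adapt the specialization argument of Theorem \ref{k(x)}. Set $S = R[x]_{\fm R[x]}$ and $b = \Extindex(R)$. By Remark \ref{rem for zeta}, it suffices to bound $p^{S}(M',N')$ by $b$ for maximal Cohen--Macaulay $S$-modules $M', N'$ with $P := p^{S}(M',N') < \infty$. I would first choose finitely generated $R[x]$-modules $M, N$ with $M \otimes_{R[x]} S \cong M'$ and $N \otimes_{R[x]} S \cong N'$; since $S$ is a localization of $R[x]$, one gets $\Ext^{i}_{R[x]}(M,N)\otimes_{R[x]}S \cong \Ext^{i}_{S}(M',N')$, which vanishes for $i > P$.

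The key observation, replacing the ``finite-dimensional $k$-algebra'' hypothesis of Theorem \ref{k(x)}, is a reduction modulo $\fm$. For any finitely generated $R[x]$-module $V$, the quotient $V/\fm V$ is a finitely generated $k[x]$-module and by the structure theorem decomposes as $k[x]^{r} \oplus \bigoplus_{j=1}^{s}k[x]/(g_{j}(x))$. Since $S/\fm S = k(x)$, Nakayama applied to the finitely generated $S$-module $V \otimes_{R[x]} S$ shows that $V \otimes_{R[x]} S = 0$ if and only if $V/\fm V$ is $k[x]$-torsion, i.e., $r = 0$. When $r=0$, the finitely generated $R$-module $V_{\alpha} := V/(x-\alpha)V$ satisfies $V_{\alpha}/\fm V_{\alpha} = (V/\fm V)_{\alpha} = \bigoplus_{j} k[x]/(g_{j}, x-\alpha)$, which vanishes whenever $\alpha$ avoids the roots of the finitely many $g_{j}$; by Nakayama, $V_{\alpha} = 0$ on this cofinite set of $\alpha$. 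Conversely, the existence of even a single $\alpha$ with $V_{\alpha} = 0$ forces $r = 0$, hence $V \otimes_{R[x]} S = 0$.

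Apply this observation to $V = \Ext^{i}_{R[x]}(M,N)$ for each $i > P$. Combined with the finiteness of the associated primes of $M$, $N$, and $\Ext^{i+1}_{R[x]}(M,N)$ (each contributing only finitely many bad $\alpha$), this gives for each fixed $i > P$ a cofinite subset of $k$ on which (a) $x - \alpha$ is a non-zero-divisor on $M$, $N$, and $\Ext^{i+1}_{R[x]}(M,N)$, and (b) $\Ext^{i}_{R[x]}(M,N)_{\alpha} = 0$. Since there are countably many values of $i > P$, the intersection of these cofinite conditions is the complement of a countable subset of $k$, so the uncountability of $k$ produces a single $\alpha \in k$ satisfying them all.

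For this $\alpha$, Lemma \ref{specialization lemma} yields $\Ext^{i}_{R}(M_{\alpha}, N_{\alpha}) = 0$ for every $i > P$; by the definition of $\Extindex(R) = b$, this upgrades to $\Ext^{i}_{R}(M_{\alpha}, N_{\alpha}) = 0$ for all $i > b$. Feeding this back into Lemma \ref{specialization lemma} forces $\Ext^{i}_{R[x]}(M,N)_{\alpha} = 0$ for $i > b$, and the converse half of the key observation then gives $\Ext^{i}_{S}(M',N') = \Ext^{i}_{R[x]}(M,N) \otimes_{R[x]} S = 0$ for $i > b$, whence $P \leq b$. The main technical hurdle is the two-way bridge between $V \otimes_{R[x]} S$ and the fibers $V_{\alpha}$, built via the $k[x]$-module structure on $V/\fm V$; the uncountability of $k$ is exactly what converts the resulting countable family of cofinite conditions into a single good value of $\alpha$.
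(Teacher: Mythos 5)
Your proposal is correct and follows essentially the same route as the paper's proof: reduce to finitely generated $R[x]$-modules, analyze $\Ext^i_{R[x]}(M,N)/\fm\,\Ext^i_{R[x]}(M,N)$ as a finitely generated $k[x]$-module together with Nakayama's lemma to pass between localization at $\fm R[x]$ and specialization at $\alpha$, use uncountability of $k$ to avoid the countably many bad values of $\alpha$, and apply Lemma \ref{specialization lemma} in both directions together with $\Extindex(R)$. The only cosmetic differences are that you secure the non-zero-divisor conditions (on $M$, $N$ and on $\Ext^{i+1}_{R[x]}(M,N)$, hence the vanishing of the $\Tor_1$ term) by avoiding finitely many associated primes, whereas the paper gets them by choosing $M\subseteq M'$, $N\subseteq N'$ and by noting that a surjective endomorphism of a finitely generated module is bijective.
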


\begin{proof}
It is known and is easily seen that  $R[x]_{\fm R[x]}$  is also a Cohen-Macaulay  local ring with dualizing module. 
We only have to show that $\zeta (R[x]_{\fm R[x]}) \leq \Extindex (R)$. 

Let $t = \Extindex (R)$, and assume that there are maximal Cohen-Macaulay $R[x]_{\fm R[x]}$-modules  $M'$ and $N'$  such that  $\Ext ^i_{R[x]_{\fm R[x]}} (M',N')=0$ for $i\gg 0$. 
We shall show that this vanishing holds for all $i >t$. 

As in the proof of Theorem \ref{k(x)}, 
we can find finitely generated $R[x]$-modules $M$ and $N$, such that $\Ext ^i_{R[x]_{\fm R[x]}} (M',N') \cong \Ext ^i_{R[x]} (M,N)_{\fm R[x]}$ for all $i$. 
For simplicity we denote the  $R[x]$-module  $\Ext ^i_{R[x]} (M,N)$  by  $E^i$.  By Nakayama's lemma, it is easy to see that 
that  $E^i_{\fm R[x]}=0$  is equivalent to that $(E^i/\fm E^i ) \otimes_{k[x]} k(x)=0$. 

Note that each  ${E^i}/{\fm E^i}$ is a finitely generated $k[x]$-module. 
Hence, as in the proof of Theorem \ref{k(x)},  we have the decomposition as $k[x]$-modules into indecomposable modules:

$$
{E^i}/{\fm E^i} \cong \bigoplus_{j=1}^{s_i} {k[x]}/(f_{ij}(x)) \oplus k[x]^{r_i}, 
$$
where  $f_{ij} \neq 0$ are irreducible polynomials of $k[x]$.
As before we can choose an element  $\alpha\in k$ so that $f_{ij} (\alpha) \neq 0$ for all $i, j$. 
Since  $x-\alpha$ acts bijectively on $k[x]/(f_{ij}(x))$, 
 we see that  $(E^i/{\fm E^i})_{\alpha} = k ^{r_i}$  for $i$. 
By the assumption, since  $r_i=0$  holds for $i\gg 0$, 
we have  $({E^i}/{\fm E^i})_{\alpha}=0$ for $i\gg 0$. 

Note that  $({E^i}/{\fm E^i})_{\alpha} \cong {E^i_{\alpha}}/{\fm E^i_{\alpha}}$  holds by a trivial reason. 
Thus it follows from Nakayama's lemma that  $E^i_{\alpha}=0$ for $i\gg 0$. 
This means that $E^i\overset{x-\alpha}\longrightarrow E^i$ is bijective for $i \gg 0$. 
Therefore we have $\Tor _1^{k[x]}\left( E^{i+1}, k[x]/(x-\alpha) \right) = 0$ for $i\gg 0$. 

Hence  by Lemma \ref{specialization lemma}, $\Ext ^i_R ( M_{\alpha}, N_{\alpha})=0$  for $i\gg 0$, and thus  $\Ext^i_R (M_{\alpha}, N_{\alpha})=0$ for $i>t$, by the definition of  $t$. 
Use Lemma \ref{specialization lemma}, and we have that $E^i_{\alpha}=0$ for all $i>t$. 
In particular, $({E^i}/{\fm E^i})_{\alpha}=0$ for $i>t$. 
This means that  $r_i=0$ for  $i>t$ and hence  ${E^i}/{\fm E^i} \otimes_{k[x]} k(x)=0$ for $i>t$, equivalently  $E^i_{\fm R[x]} = 0$ for $i > t$. 
This shows $\zeta(R[x]_{\frak mR[x]}) \leq t$ as desired. 
\end{proof}

\begin{remark} 
As in Remark \ref{rem for Extgap}, we can prove the following statement completely in the same way as the proof above: 

\vspace{6pt}
{\it 
Let  $(R, \fm, k)$ be an Ext-bounded Cohen-Macaulay local ring admitting dualizing module. 
Assume that $R$  has an infinite coefficient field $k$. 
Then $R[x]_{\fm R[x]}$ is also Ext-bounded. 
}
\end{remark}

\bigskip 

\begin{center} \textsc{Acknowledgments}
\end{center}

The whole work of this paper was done during the visit of the first author to Department of Mathematics of Okayama University. 
The first author is grateful to Okayama University for its hospitality and facilities.


\end{document}